\documentclass[10pt]{article} 
\usepackage[accepted]{tmlr}


\usepackage{amsmath,amsfonts,amsthm,bm}


\newcommand\norm[1]{\left\lVert#1\right\rVert}
\newcommand\bigoh[1]{\mathcal{O}\left(#1\right)}

\newcommand{\chevron}[1]{\left\langle #1 \right \rangle}
\newcommand{\expval}[0]{\mathbb{E} }

\newcommand{\pderiv}[2]{\frac{\partial #1}{\partial #2}}
\newcommand{\aeps}[0]{a^{\epsilon}}
\newcommand{\ueps}[0]{u^{\epsilon}}
\newcommand{\losseps}[0]{\mathcal{L}^{\epsilon}}
\newcommand{\Leps}[0]{L^{\epsilon}}   
\newcommand{\Keps}[0]{K^{\epsilon}}   


\newtheorem{thm}{Theorem}[section]
\newtheorem{cor}[thm]{Corollary}
\newtheorem{lem}[thm]{Lemma}



















\def\ceil#1{\lceil #1 \rceil}

\def\1{\bm{1}}










\DeclareMathAlphabet{\mathsfit}{\encodingdefault}{\sfdefault}{m}{sl}
\SetMathAlphabet{\mathsfit}{bold}{\encodingdefault}{\sfdefault}{bx}{n}













\usepackage{hyperref}
\usepackage{comment}
\usepackage{graphicx}
\usepackage{url}

\title{Physics informed neural networks for elliptic equations with  \\ oscillatory differential operators}


\author{\name Arnav Gangal \email agangal00@gmail.com \\
      \addr Department of Mathematics\\
      University of California, Los Angeles 
      \AND
      \name Luis Kim \email luisyookim02@gmail.com \\
      \addr Department of Mathematics \\
      The University of Texas at Austin
      \AND
      \name Sean P.\ Carney \email spcarney@math.ucla.edu \\
      \addr Department of Mathematics \\
      University of California, Los Angeles \\
      }



\begin{document}

\maketitle

\begin{abstract}
Physics informed neural network (PINN) based solution methods 
for differential equations have recently shown success in a
variety of scientific computing applications.
Several authors have reported difficulties, 
however, when using PINNs to solve equations with multiscale
features. The objective of the present work is to illustrate 
and explain the difficulty of using standard PINNs for the particular
case of 
divergence-form elliptic partial differential equations (PDEs) with oscillatory 
coefficients present in the differential operator.
We show that if the coefficient in the elliptic operator 
$a^{\epsilon}(x)$ is of the form $a(x/\epsilon)$ for 
a 1-periodic coercive function $a(\cdot)$, 
then the Frobenius norm of the neural tangent kernel (NTK) matrix 
associated to the loss function 
grows as $1/\epsilon^2$. 
%
%
%
%
%
%
%
%
%
This implies that as the separation 
of scales in the problem increases, training the neural network
with gradient descent based methods
to achieve an accurate approximation of the solution to the 
PDE becomes increasingly difficult. 
Numerical examples illustrate the 
stiffness of the optimization problem.

\end{abstract}

\section{Introduction}
\label{sec:intro}
Recent developments in deep learning have shown great promise for advancing computational and 
applied mathematics \citep{weinan2021dawning}. 
Physics informed neural networks (PINNs) have recently emerged as a popular method 
for scientific computation.
Building off earlier work \citet{lagaris1998artificial,psichogios1992hybrid},
PINNs were introduced in \citet{raissi2019physics} and seek to approximate the true solution of
a differential equation by a neural network $u(x; \theta)$ parameterized 
by weights and biases $\theta$. 
Consider the following general partial differential equation (PDE) defined on some 
$\Omega \subset \mathbb{R}^n$ by a differential operator $\mathcal{N}$
\begin{align}
\mathcal{N}[u](x) = f(x)&, \qquad x \in \Omega \nonumber \\
u(x) = g(x)&, \qquad x \in \partial \Omega, \label{eq:general_pde}
\end{align}
where $x$ and $\Omega$ represent some general space-time coordinate and domain,
respectively. Here and throughout this paper, the boundary conditions are taken to be of Dirichlet
type. By the uniform approximation theorem \citep{hornik1989multilayer,leshno1993multilayer}, given
sufficient data, 
a neural network can uniformly approximate classical smooth solutions 
to \eqref{eq:general_pde}
whenever they exist. 

In practice, the network parameters $\theta$ are determined by
minimizing a loss function $\mathcal{L}$ that enforces \eqref{eq:general_pde} to 
hold for a set of $N_c$ collocation and $N_b$ boundary points, 
$\{x_i\}_{i=1}^{N_c} \in \Omega$ and $\{s_i\}_{i=1}^{N_b} \in \partial\Omega$, respectively.
The loss is
\begin{equation}\label{eq:gen_pinn_loss}
\mathcal{L}(\theta) = \frac{1}{N_c} \sum_{i=1}^{N_c} \frac12 \Big|\mathcal{N}[u](x_i;\theta)-f(x_i)\Big|^2
+  \frac{\lambda}{N_b}\sum_{i=1}^{N_b} \frac12 \Big| u(s_i; \theta) - g(s_i) \Big|^2 ,
\end{equation}
where $\lambda$ is a tunable parameter that weighs the relative importance of the 
boundary conditions. 
We refer to \citet{karniadakis2021physics} for a review of this methodology applied in a wide range of 
contexts in scientific computing.

Despite the success of PINNs in a wide variety of applications, numerous 
authors have reported difficulties applying the technique
to problems with multiscale features. Some examples include a scalar, 
nonlinear hyperbolic equation from a model of two-phase immiscible 
fluid flow in porous media \citep{fuks2020limitations}, which can 
support shock waves, and 
systems of ordinary differential equations governing chemical kinetics 
\citep{ji2021stiff}, which exhibit stiff dynamics that evolve over a 
wide range of time scales. 
Difficulties have also been reported for the Helmholtz equation \citep{wang2021understanding}, 
as well as linear hyperbolic problems, for example 
for example the one-dimensional advection equation 
\citep{krishnapriyan2021characterizing} and 
the wave equation \citep{wang2021eigenvector}. 

The focus of the current work is to illustrate and explain the difficulty of using standard PINN
solution methods for linear elliptic boundary value problems (BVPs) of the form
\begin{align}
-\nabla \cdot \big( a^{\epsilon}(x) \nabla u^{\epsilon}(x) \big) &= f(x), \qquad x \in \Omega \nonumber \\
\ueps(x) &= g(x), \qquad x \in \partial \Omega, \label{eq:darcys_law}
\end{align}
where the 
coefficient tensor $\aeps: \mathbb{R}^n \to \mathbb{R}^n$ is uniformly bounded and coercive in $\epsilon$ and
assumed to consist of entries that contain frequencies on 
the order of $\epsilon^{-1}$ for $ 0 < \epsilon \ll 1$. The measure of the 
domain $\Omega \subset \mathbb{R}^n$ is assumed to be $\bigoh{1}$, and hence, \eqref{eq:darcys_law} 
is a multiscale problem that models, for example, steady-state heat conduction 
in a composite material or porous media flow governed by Darcy's law.

Theoretical analysis and numerical experiments presented below illustrate that,
whenever standard PINN architectures are used in 
conjunction with gradient descent based training for the multiscale problem 
\eqref{eq:darcys_law}, the resulting optimization problem becomes increasingly difficult as the scale 
separation in the BVP increases, i.e.\ as $\epsilon$ vanishes. After motivating the 
present study in Section \ref{sec:motivation}, 
we show in
Section \ref{sec:ntk} that the  
neural tangent kernel matrix associated with the PINN approximation to 
\eqref{eq:darcys_law} has a Frobenius norm that becomes unbounded 
as $\epsilon \downarrow 0$. 
Numerical examples in Section \ref{sec:numerical_results} illustrate that during training, the 
ordinary differential equation that governs the evolution of the BVP residuals indeed becomes
increasingly stiff as $\epsilon \downarrow 0$, translating to poor PINN performance for 
problems with a large separation of scales.

\section{Motivation} 
\label{sec:motivation}
The motivation for attempting to use physics informed neural network solutions 
to the oscillatory problem \eqref{eq:darcys_law} is to investigate whether
a connection can be established between asymptotic homogenization theory and the 
so-called ``frequency principle'' in deep learning.  

Recall from mathematical homogenization theory \citep{bensoussan2011asymptotic} 
that, under suitable conditions, the solution to \eqref{eq:darcys_law} is well 
approximated as $\epsilon\downarrow 0$ by the solution to a homogenized equation 
\begin{align*}
-\nabla \cdot \big( \overline{a}(x) \nabla \overline{u}(x) \big) &= f(x), \qquad x \in \Omega \nonumber \\
\overline{u}(x) &= g(x), \qquad x \in \partial \Omega. \label{eq:hom_eqn}
\end{align*}
Both the homogenized coefficients $\overline{a}$ and the solution $\overline{u}$
do not contain $\epsilon$-scale oscillations; the latter approximates the large-scale, 
low-frequency features of the oscillatory function $\ueps$. 

Additionally, 
when neural networks learn a target function, they are known
to learn the low frequency components more rapidly than the large 
frequencies \citep{rahaman19,xu2020}. This ``frequency
principle'' was shown to hold for gradient descent training in \citet{luo2021,markidis2021old}; 
empirically the result can be observed for PINNs applied to 
relatively simple problems \citep{wang2021eigenvector}, even when more widely used optimizers are
used for training, e.g.\ Adam \citep{kingma2014adam}. 
Consider as a brief representative example the PINN solution to the one-dimensional 
Poisson BVP
\begin{equation}\label{eq:1d_poisson}
-\frac{d^2}{dx^2} u (x) = \sin(x) + \sin(5x) + \sin(15x) + \sin(55x) =: f(x) 
\end{equation}
for $x \in (-\pi,\pi)$ with homogeneous Dirichlet boundary conditions 
$u(-\pi) = u(\pi) = 0$. 
For collocation points $\{x_i\}_{i=1}^{N_c} \in (-\pi,\pi)$ the loss function becomes 
\begin{equation}\label{eq:1d_poisson_loss}
\mathcal{L}(\theta) = \frac{1}{N_c} \sum_{i=1}^{N_c} \frac12 
\Big| \frac{d^2}{dx^2} u(x_i; \theta) + f(x_i) \Big|^2 + 
\frac14 \lambda \left( \big|u(-\pi;\theta)\big|^2 + \big|u(\pi;\theta)\big|^2\right).
\end{equation}
Figure \ref{fig:freq_principle_1d_poisson} shows the evolution (as a function of training
iteration) of the complex modulus of 
\begin{equation}\label{eq:fourier_error_1d_poisson}
\widehat{e}_k = (\widehat{u}_{\rm true})_k - (\widehat{u}_{\rm NN})_k 
\end{equation}
where $u_{\rm true}$ is the true solution to the BVP, $u_{\rm NN}$ is the 
neural network approximation, $\widehat{\cdot}$ denotes the discrete 
Fourier transform, and the index $k$ denotes the frequency of the coefficient. A rolling average is used to make the trajectories more legible.  
The PINN solution is computed with a fully connected neural network with four hidden layers 
of sixty nodes each, $N_c =512$ equispaced collocation points, and $\lambda=80$; 
see Appendix \ref{sec:appendix0} for a complete description of the training process that generates the final neural network output. 
One can clearly observe the frequency principle in this simple example; the low frequencies of the target BVP solution 
are more rapidly learned than the high frequencies. 
See also \citet{markidis2021old} for another example in a two-dimensional Poisson problem, as well as \citet{wang2021eigenvector} for 
mathematical analysis of this phenomenon for Poisson equations. 
\begin{figure}[h]
    \centering
 \includegraphics[width=0.75\textwidth]{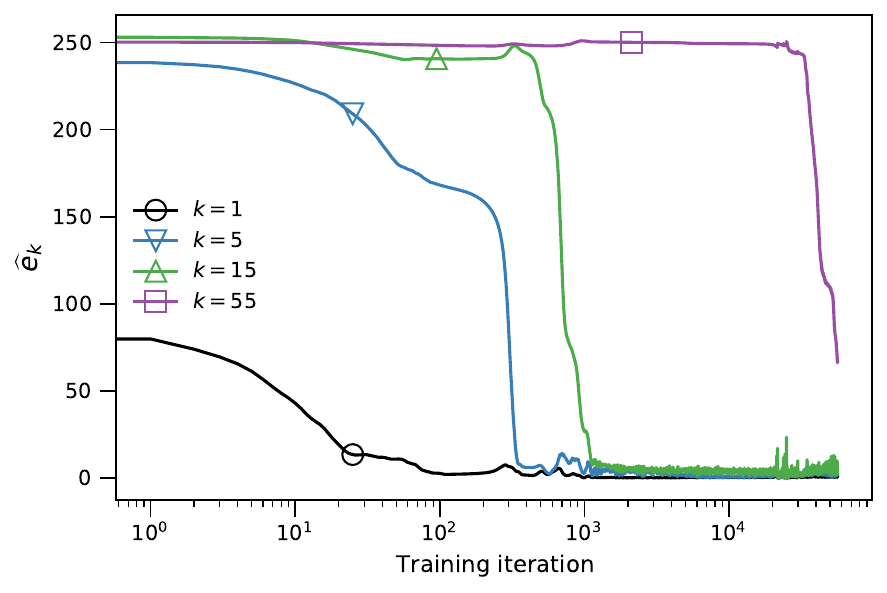}
\caption{A simple illustration of the frequency principle: evolution 
as a function of training iteration
of the magnitude of the discrete Fourier transform of the error \eqref{eq:fourier_error_1d_poisson}
between the true solution to the BVP \eqref{eq:1d_poisson} and the neural network approximation. 
} 
\label{fig:freq_principle_1d_poisson}
\end{figure}

Given some evidence that PINNs can learn low frequency features of 
PDE solutions, it is reasonable to ask if the homogenized solution 
`naturally' arises when trying to learn solutions to the multiscale 
problem \eqref{eq:darcys_law}, and, if not, for what reason? The homogenized
solution itself is of course useful in a variety of applications; it 
additionally could be used to construct coarse grid solutions for a multigrid solver
of the full, oscillatory problem as in \citet{engquist1997convergence}. The
natural smoothing properties of standard iterative solvers, e.g.\ the method of 
Gauss-Seidel or successive over-relaxation \citep{briggs2000multigrid}, would then rapidly reduce the error
of high-frequency solution components that are not easily approximated by the 
neural networks; such a `hybrid' multigrid strategy was explored in \citet{markidis2021old} 
for Poisson equations.  

Recent theoretical results in \citet{shin2020} guarantee that 
as the number of collocation $N_c$ and boundary $N_b$ points tend to infinity, the 
sequence of minimizers to a ``H\"{o}lder regularized'' version of the loss function
 \eqref{eq:gen_pinn_loss} will converge to classical solutions of elliptic and 
parabolic PDEs (whenever they exist) in the limit of infinite learning data. 
 Nevertheless, the authors in both \citet{han2022neural} and \citet{leung2021nh}
reported difficulties training neural networks to achieve such minimizers of the 
loss function associated to the multiscale problem \eqref{eq:darcys_law}. 
The purpose of the current work is to characterize the reason 
why optimization is difficult with both theory and numerical examples. 

The theoretical results presented below describe the neural tangent kernel matrix of the PINN
solution to the BVP \eqref{eq:darcys_law}; this matrix appears in the ordinary 
differential equation that, under gradient descent training dynamics, governs
the evolution of the PDE and boundary residuals that define the PINN loss function. 
Under a few technical assumptions, we show that the matrix Frobenius norm becomes 
unbounded as $\epsilon \downarrow 0$. This result, along with numerical evidence
presented in Section \ref{sec:numerical_results}, suggests that as that as the scale separation in 
the differential operator in \eqref{eq:darcys_law} increases, the optimization problem that
determines the neural network approximation to the PDE solution becomes increasingly stiff.

\section{Neural tangent kernel matrix theory} 
\label{sec:ntk}
\subsection{Theoretical results} \label{subsec:main_theory}
We now describe the neural tangent kernel (NTK) matrix for the physics informed neural network 
approximation to the multiscale elliptic equation \eqref{eq:darcys_law} before showing 
that 
it can grow arbitrarily large in norm as $\epsilon \downarrow 0$. See 
\citet{jacot2018} for the original development of the NTK theory for least-squares
regression problems, as well as \citet{wang2021eigenvector} and \citet{wang2022_JCP} 
for an extension of the theory to PINNs. 

For the presentation below it will be useful to define the linear multiscale differential 
 operator
\begin{equation*}\label{eq:Leps}
\Leps\varphi := -\nabla \cdot \big( \aeps \nabla \varphi \big),
\end{equation*}
where $\varphi$ is some suitably regular function.
The multiscale elliptic boundary value problem \eqref{eq:darcys_law} then becomes 
\begin{align}
\Leps\ueps &= f  \qquad \Omega  \nonumber \\
\ueps &= g  \qquad  \partial \Omega.  \label{eq:concise_darcy}
\end{align}
For simplicity, we assume throughout that the entries of the coefficient tensor 
$\aeps$ are at least once continuously differentiable, and hence bounded on 
any compact domain $\Omega$.

Given some neural network $u(x;\theta)$ parameterized by $N_p$ weights and biases 
$\theta$, as well as collocation $\{x_i\}_{i=1}^{N_c} \in \Omega$ and boundary points
$\{s_i\}_{i=1}^{N_b} \in \partial \Omega$, the loss function associated 
to \eqref{eq:concise_darcy} 
becomes 
\begin{equation}\label{eq:loss_eps}
\losseps(\theta):= \frac{1}{N_c} \sum_{i=1}^{N_c} \frac12 
\Big| \Leps u(x_i;\theta) - f(x_i)\Big|^2 + \frac{\lambda}{N_b} \sum_{i=1}^{N_b} \frac12 \Big| u(s_i; \theta)-g(s_i) \Big|^2. 
\end{equation}
If the residual values
\begin{equation}\label{eq:residual_pde}
r_{\rm pde}(x_i; \theta) := \Leps u(x_i; \theta)-f(x_i), \qquad i = 1,\ldots, N_c
\end{equation}
and 
\begin{equation}\label{eq:residual_b}
r_{\rm b}(s_i; \theta) := u(s_i; \theta) - g(s_i), \qquad i=1,\ldots , N_b
\end{equation}
are grouped together into a single vector $y(\theta)$, and if
the parameters $\theta$ evolve according to the gradient flow
\begin{equation}\label{eq:grad_flow}
\frac{d\theta}{dt} = - \nabla_{\theta} \losseps ,
\end{equation}
then $y(\theta(t))$ will evolve according to an
initial value problem of the form 
\begin{equation}\label{eq:y_evolution}
\frac{d}{dt} y(t)  = -\Keps(y(t))\,  y(t),  
\end{equation}
where $\Keps$ is called the neural tangent kernel matrix. 
Note here that both $\theta$ and $y$ implicitly depend on 
$\epsilon$; however, the dependence
is not explicitly marked.  

We next derive \eqref{eq:y_evolution}, noting again that 
similar results can be found in \citet{wang2021eigenvector} and \citet{wang2022_JCP}.

\begin{lem} \label{lem:ntk}               
For neural network parameters $\theta \in \mathbb{R}^{N_p}$,
let $y(\theta) \in \mathbb{R}^{N_c + N_b}$ be the vector of residual values 
$$
y(\theta) = \begin{pmatrix}
r_{\rm pde}(x_1; \theta), \ldots, r_{\rm pde}(x_{N_c}; \theta),
r_{\rm b}(s_1; \theta), \ldots, r_{\rm b}(s_{N_b}; \theta) 
\end{pmatrix}^T
$$
where the entries $r_{\rm pde}$ and $r_{\rm b}$ are defined by 
\eqref{eq:residual_pde} and \eqref{eq:residual_b}. Suppose that 
the parameters $\theta$ evolve from some initial value $\theta_0$
according to the gradient flow \eqref{eq:grad_flow}. Then $y(\theta(t))$
evolves from the initial condition $y(\theta_0)$ according to
\begin{equation}\label{eq:ntk_ivp}
\frac{d}{dt} y(t)  = -\Keps(y(t))\,  y(t),  
\end{equation}
where the explicit dependence on $\theta$ in \eqref{eq:ntk_ivp} is 
dropped for convenience. 
The neural tangent kernel matrix is given by 
\begin{equation} \label{eq:Keps}
\Keps(t) = \begin{pmatrix} \Keps_{uu}(t) & \Keps_{ub}(t) \\ \Keps_{bu}(t) & K_{bb}(t) \end{pmatrix} 
\in \mathbb{R}^{(N_c+N_b)\times (N_c+ N_b) },
\end{equation}
where the subblocks have entries
\begin{equation}
    (\Keps_{uu})_{ij}(t) = \frac{1}{N_c} \sum_{l=1}^{N_p} 
\pderiv{}{\theta_l}\Leps u(x_i; \theta(t)) 
\pderiv{}{\theta_l}\Leps u(x_j; \theta(t)),
\qquad 1\le i,j \le N_c,
\nonumber
\end{equation}
\begin{equation}
    (K_{bb})_{ij}(t) =  \frac{\lambda}{N_b} \sum_{l=1}^{N_p} 
\pderiv{}{\theta_l}u(s_i; \theta(t)) 
\pderiv{}{\theta_l}u(s_j; \theta(t)),
\qquad 1\le i,j \le N_b,
\nonumber
\end{equation}
and 
\begin{equation}
    (\Keps_{ub})_{ij}(t) = \frac{\lambda}{N_b} \sum_{l=1}^{N_p} 
\pderiv{}{\theta_l}\Leps u(x_i; \theta(t)) 
\pderiv{}{\theta_l}u(s_j; \theta(t)),
\qquad 1\le i \le N_c, \, \, 1 \le j \le N_b, 
\nonumber
\end{equation}
while $\Keps_{bu}$ is simply the scaled transpose of $\Keps_{ub}$: 
$$
(\Keps_{bu})_{ij} = \frac{N_b}{\lambda} \frac{1}{N_c} \, \, (\Keps_{ub})_{ji},
\qquad 1\le i \le N_c, \, \, 1 \le j \le N_b. 
$$
\end{lem}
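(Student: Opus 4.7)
The plan is to derive the ODE \eqref{eq:ntk_ivp} by applying the chain rule to the time-dependent vector $y(\theta(t))$ and substituting the gradient flow equation \eqref{eq:grad_flow}. First I would write
$$
\frac{d}{dt} y_k(\theta(t)) = \sum_{l=1}^{N_p} \pderiv{y_k}{\theta_l}\, \frac{d\theta_l}{dt} = -\sum_{l=1}^{N_p} \pderiv{y_k}{\theta_l}\, \pderiv{\losseps}{\theta_l},
$$
and then identify $\partial_{\theta_l} y_k$ with either $\partial_{\theta_l} \Leps u(x_k;\theta)$ (if $k\le N_c$) or $\partial_{\theta_l} u(s_{k-N_c};\theta)$ (if $k>N_c$), since $f(x_i)$ and $g(s_i)$ carry no $\theta$ dependence.

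Next I would expand the gradient of the loss itself via the chain rule applied to \eqref{eq:loss_eps}:
$$
\pderiv{\losseps}{\theta_l} = \frac{1}{N_c}\sum_{j=1}^{N_c} r_{\rm pde}(x_j;\theta)\,\pderiv{}{\theta_l}\Leps u(x_j;\theta) + \frac{\lambda}{N_b}\sum_{j=1}^{N_b} r_{\rm b}(s_j;\theta)\,\pderiv{}{\theta_l} u(s_j;\theta).
$$
Substituting this back, interchanging the finite sums over $l$ and $j$, and recognizing the residuals as the entries $y_j$ of the concatenated residual vector, expresses $\dot y_k$ as a linear combination of the $y_j$. Reading off the coefficients of $y_j$ for the four cases $(k\le N_c,\, j\le N_c)$, $(k\le N_c,\, j > N_c)$, $(k>N_c,\, j\le N_c)$, $(k>N_c,\, j>N_c)$ produces exactly the four subblocks appearing in \eqref{eq:Keps}.

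The proof is essentially bookkeeping; there is no genuine obstacle. The only point that requires attention is matching the scalar prefactors $1/N_c$ and $\lambda/N_b$ on each of the four blocks, and in particular verifying that the asymmetric relation between $\Keps_{ub}$ and $\Keps_{bu}$ comes out correctly. This asymmetry is inherited from the fact that the upper rows of $y$ (PDE residuals, whose $\theta$-derivatives involve $\Leps u$) get paired with the $1/N_c$-weighted PDE-gradient term in $\nabla_\theta \losseps$, whereas the lower rows (boundary residuals) get paired with the $\lambda/N_b$-weighted boundary-gradient term. A final remark one could make is that the derivation nowhere uses any structural properties of $\aeps$ beyond the fact that $\Leps$ is a linear operator acting on $u(\cdot;\theta)$, so that $\partial_{\theta_l}$ commutes with $\Leps$; this is what allows us to write $\partial_{\theta_l} r_{\rm pde}(x_i;\theta) = \Leps[\partial_{\theta_l} u](x_i;\theta)$ and sets the stage for the $\epsilon$-dependent blow-up analyzed in the subsequent section.
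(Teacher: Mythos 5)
Your proposal is correct and follows essentially the same route as the paper's proof in Appendix A: compute $\nabla_\theta\losseps$ via the chain rule on \eqref{eq:loss_eps}, differentiate the residuals in time, substitute the gradient flow \eqref{eq:grad_flow}, and rearrange the double sum to read off the four blocks (including the asymmetric prefactors that make $\Keps_{bu}$ a scaled transpose of $\Keps_{ub}$). Your closing remark that only the linearity of $\Leps$ in $u$ is needed is a nice observation, though it is not invoked explicitly in the paper's version of the argument.
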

The proof is a computation that follows from the chain rule from differential calculus;
it can be found in Appendix \ref{sec:appendix1}. We note that while the $\Keps_{uu}$ and $\Keps_{bb}$ subblocks
of the NTK matrix are symmetric, overall, $\Keps$ is not symmetric unless $N_b = \lambda N_c$. 

Next we show that the NTK matrix \eqref{eq:Keps} associated to the multiscale elliptic 
problem--in particular the $\Keps_{uu}$, $\Keps_{ub}$ and $\Keps_{bu}$ subblocks--can become 
arbitrarily large for vanishing $\epsilon$. In contrast, because the boundary condition 
$g$ in \eqref{eq:concise_darcy} is indepedent of $\epsilon$, there is no explicit 
$\epsilon$-dependence in the $K_{bb}$ subblock.

For simplicity we assume the problem is one-dimensional 
($n=1$) and that the multiscale coefficient $\aeps(x) = a(x/\epsilon)$ is $\epsilon$-periodic.
After showing the desired result  
for $n=1$, we briefly sketch below an additional proof for the higher dimension case $n>1$ under 
additional technical assumptions.

In dimension $n=1$, \eqref{eq:concise_darcy} becomes
\begin{align}
- \frac{d}{dx}\left(a(x/\epsilon) \frac{d}{dx}\, \ueps(x) \right) &= f(x),  \qquad x \in (a,b)  \nonumber \\
\ueps(a) = u_a, \qquad \ueps(b) &= u_b.   \label{eq:1d_darcy} 
\end{align}
 The loss function associated to \eqref{eq:1d_darcy} is equation \eqref{eq:loss_eps}
with $N_b=2$ and elliptic operator $\Leps$ given by
\begin{equation} \label{eq:Leps_expanded_1d}
\Leps u(x;\theta) = -\Big( a(x/\epsilon) \frac{d^2}{dx^2}\, u(x;\theta) + \frac{1}{\epsilon} a'(x/\epsilon) \frac{d}{dx} u(x;\theta) \Big), 
\end{equation}
where it is assumed the oscillatory function $a$ is differentiable. 
The $1/\epsilon$ factor in the second term will be the source of the divergent 
behavior of the NTK as $\epsilon \downarrow 0$. 

We first show the result for a neural 
network with one hidden layer before extending 
it to more general fully connected networks.


\begin{thm}\label{thm:main_thm}             
Let $u:\mathbb{R} \to \mathbb{R}$ be a neural network with one hidden layer of width $d$ and 
smooth activation function $\sigma$, so that
\begin{equation}\label{eq:1layer_nn}
u(x;\theta) = \sum_{k=1}^d W_k^{(1)} \sigma \Big(W_{k}^{(0)} x + b_k^{(0)} \Big) + b^{(1)} , 
\end{equation}
where $b^{(1)} \in \mathbb{R}$, 
$W^{(1)} \in \mathbb{R}^{1\times d}$, and
 $W^{(0)}, b^{(0)} \in \mathbb{R}^{d\times 1}$, 
and assume that $\forall x\in \mathbb{R}$
\begin{equation}\label{eq:sigma_assumption}
0 < \sigma'(x) <  \infty. 
\end{equation} 
Let $a(y)$ be a one-periodic, non-constant $C^1$ function that is bounded and coercive, so that $\forall y \in \mathbb{R}$, 
there exist some $\lambda, \Lambda \in \mathbb{R}$ such that 
$$
0< \lambda  \le a(y) \le \Lambda,
$$ 
and let $\aeps(x) = a(x/\epsilon)$. 
For a fixed number of collocation points $N_c$, let $\Keps$ be the neural tangent
kernel matrix \eqref{eq:Keps} associated to the loss function \eqref{eq:loss_eps} of the 
boundary value problem \eqref{eq:1d_darcy}, and 
suppose the network parameters $\theta$ evolve according to the gradient flow \eqref{eq:grad_flow}
for $0 \le t \le T$ for some $T>0$. 

Suppose also that the parameters $\theta$ evolving according to \eqref{eq:grad_flow} are bounded uniformly
in both $t$ and $\epsilon$:
\begin{equation}\label{eq:assumption2}
\sup_{0 < \epsilon < \epsilon_0 } \sup_{0\le t \le T} \norm{\theta(t)}_{\infty} < C
\end{equation}
for some $\epsilon_0 > 0$. 
Finally, let $\Upsilon$ be the set of $t \in[0,T]$ for
which there exists at least one entry of $W^{(0)}$ that
is asymptotically larger than $\epsilon$; that is, $t \in \Upsilon$ if
and only if 
$$
\lim_{\epsilon \downarrow 0 } \epsilon/W^{(0)}_{l}(t) = 0
$$
for some $1 \le l \le d$. 
Then $\forall t \in \Upsilon$
\begin{equation*}\label{eq:final_result}
\lim_{\epsilon \downarrow 0} \norm{\Keps(t)}_F = \infty 
\end{equation*} 
where $\norm{\cdot}_F$ is the Frobenius norm.
\end{thm}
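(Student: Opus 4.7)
\emph{Proof plan.} The plan is to lower bound $\|\Keps(t)\|_F$ by a single diagonal entry of the $\Keps_{uu}$ subblock and to expose the $1/\epsilon$ blow-up inside that entry coming from the singular coefficient $(1/\epsilon)\,a'(x/\epsilon)$ in $\Leps$.

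Since $\|\Keps(t)\|_F^2 \ge (\Keps_{uu}(t))_{ii}^2$ for every $1\le i\le N_c$ and $(\Keps_{uu})_{ii}$ is itself a sum of squares over the $N_p$ parameters, it suffices to show that one squared partial derivative diverges as $\epsilon\downarrow 0$. The hypothesis $t\in\Upsilon$ furnishes an index $l^*$ with $W^{(0)}_{l^*}(t)/\epsilon\to\infty$; I would single out the parameter $\theta_l=W^{(1)}_{l^*}$. Differentiating \eqref{eq:1layer_nn} and substituting into \eqref{eq:Leps_expanded_1d} gives
\[
\frac{\partial\Leps u(x_i;\theta)}{\partial W^{(1)}_{l^*}} = -a(x_i/\epsilon)\,(W^{(0)}_{l^*})^2\,\sigma''(z_{l^*}) \;-\; \frac{W^{(0)}_{l^*}}{\epsilon}\,a'(x_i/\epsilon)\,\sigma'(z_{l^*}),
\]
with $z_{l^*}=W^{(0)}_{l^*}x_i+b^{(0)}_{l^*}$.

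The first summand is bounded uniformly in $\epsilon$ by \eqref{eq:assumption2} and continuity of $a$ and $\sigma''$; the second summand factors as $(W^{(0)}_{l^*}/\epsilon)$ times the bounded quantity $a'(x_i/\epsilon)\sigma'(z_{l^*})$, and the prefactor diverges by definition of $\Upsilon$. Compactness of the range of $z_{l^*}$ (again by \eqref{eq:assumption2}), together with continuity and strict positivity of $\sigma'$, supplies $\sigma'(z_{l^*})\ge c_\sigma>0$ uniformly in $\epsilon$. Because $a$ is non-constant and $C^1$ periodic, $a'$ is not identically zero on any interval, so one can secure $|a'(x_i/\epsilon)|\ge c_a>0$ along $\epsilon\downarrow 0$ by suitably choosing the collocation point $x_i$. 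Squaring and dividing by $N_c$ then yields $(\Keps_{uu}(t))_{ii}\gtrsim (W^{(0)}_{l^*}/\epsilon)^2$, which forces $\|\Keps(t)\|_F\to\infty$.

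The main obstacle will be the last step: because $a'(x_i/\epsilon)$ oscillates and can pass through zero along discrete sequences $\epsilon_n\downarrow 0$, some care is needed to promote the apparent $\limsup=\infty$ to the genuine $\lim=\infty$ asserted in the theorem. I expect to handle this either by working with more than one collocation point so that at least one $x_i/\epsilon$ always lies outside a neighborhood of the zero set of $a'$, or by replacing the single-entry bound with the trace estimate $\|\Keps(t)\|_F\ge N_c^{-1/2}\operatorname{tr}(\Keps_{uu}(t))$ and then using an averaging argument across collocation points.
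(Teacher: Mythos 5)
Your plan is essentially the paper's own proof.  Both arguments lower-bound $\|\Keps\|_F$ by a diagonal entry of $\Keps_{uu}$, both single out the parameter $W^{(1)}_{l^*}$ (the paper phrases this as $\partial_{W^{(1)}_\gamma}\Leps u = \Leps g_\gamma$ with $g_\gamma(x)=\sigma(W^{(0)}_\gamma x+b^{(0)}_\gamma)$, which is the same quantity you compute), and both isolate the term $\epsilon^{-1}a'(x_\alpha/\epsilon)\,\sigma'(z_{l^*})\,W^{(0)}_{l^*}$, using the uniform parameter bound \eqref{eq:assumption2} and the strict positivity \eqref{eq:sigma_assumption} to control everything else.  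The obstacle you flag at the end is real, and the paper handles it in precisely the way you anticipate: it chooses a monotone sequence $\epsilon\downarrow 0$ along which $a'(x_\alpha/\epsilon)\ne 0$ (e.g.\ fix a $y^*$ with $a'(y^*)\ne 0$ and take $\epsilon_n = x_\alpha/(y^*+n)$).  As you correctly observe, this establishes $\limsup_{\epsilon\downarrow 0}\|\Keps\|_F=\infty$ rather than the stated $\lim$; the paper's proof has the same imprecision, and your suggested remedies (spreading over several collocation points, or lower-bounding via $\operatorname{tr}\Keps_{uu}$ with an averaging argument) would in fact strengthen the result beyond what the paper proves.  So you have reproduced the paper's argument and, in flagging the $\limsup$ vs.\ $\lim$ point, identified a genuine gap in the original proof.
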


Before proving the result, we briefly remark on the theorem assumptions. 
First note that the number of collocation points $N_c$ is assumed to be fixed. 
Since we will show that the entries of the NTK matrix are proportional to 
$$\frac{1}{N_c } \, \frac{1}{\epsilon^2},$$ the singular nature of the evolution of the residuals
\eqref{eq:y_evolution} could in principle be prevented by taking $N_c \sim \epsilon^{-2}$. However, 
this can make the computational cost of gradient descent based optimization prohibitively 
expensive whenever the problem scale separation is sufficiently large. 

Next, note the assumption \eqref{eq:sigma_assumption} on $d \sigma/dx$ holds for sigmoidal-type 
activation functions commonly used for PINNs \citep{lu2021deepxde}, such as the hyberbolic tangent and logistic 
functions. Although it does not hold for the ReLU function $\sigma(x) = \max(0,x)$, these 
are of course not suitable for PINNs and other NN based solution methods 
that are based on the strong formulation of PDEs with 
second order (or higher) differential operators.

Regarding the set $\Upsilon \subseteq [0,T]$, note that if $t \notin \Upsilon$, then each 
entry of $W^{(0)}$ will converge 
to $0$ in the limit $\epsilon \downarrow 0$; in this scenario the neural network 
\eqref{eq:1layer_nn} would limit to a constant function and hence not be a suitable 
solution to \eqref{eq:1d_darcy} in general. Finally, 
if the uniform bound \eqref{eq:assumption2} on the network 
parameters $\theta$ did not hold, then an unstable, divergent training process could 
result \citep{wang2022_JCP}; the bound helps prevent the so-called ``vanishing gradient'' 
problem that can occur when components 
of $\nabla_{\theta} \losseps$ vanish, preventing descent in that direction. 


\begin{proof}[Proof of Theorem \ref{thm:main_thm}]              
The proof, like that of Lemma \ref{lem:ntk}, is a consequence of direct computation; 
note that it is sufficient for a single entry of a matrix to diverge to ensure
the Frobenius norm diverges. In particular we show that entries in the $\Keps_{uu}$
subblock of the full NTK matrix $\Keps$ scale as $1/(N_c\,\epsilon^2)$. 

Indeed, recall from Lemma \ref{lem:ntk} that 
\begin{equation} \label{eq:Kuu_entry}
(\Keps_{uu})_{\alpha\alpha}(t) = \frac{1}{N_c} \sum_{\gamma=1}^{N_p} 
\Big(\pderiv{}{\theta_{\gamma}} \Leps u(x_{\alpha}; \theta(t)) \Big)^2  
\end{equation}
for any $1\le \alpha\le N_c$. 
Since \eqref{eq:Kuu_entry} is a sum of squares, if
just one entry in the sum blows up as $\epsilon \downarrow 0$, 
then the entire sum of course will as well. 
Below we drop the explicit dependence of the parameters $\theta$ on $t$ for
notational convenience. 

First, let $g(x;\theta)$ be the $\mathbb{R}^d$ valued function whose $k$-th component
equals 
$$
g_k(x;\theta) = \sigma \Big(W_{k}^{(0)} x + b_k^{(0)} \Big),
$$
so that 
$$
u(x;\theta) = \sum_{k=1}^d W_k^{(1)} g_k(x;\theta)) + b^{(1)}.
$$ 
By linearity, 
$$
\pderiv{}{W^{(1)}_{\gamma}} \Leps u(x;\theta)  = \Leps g_{\gamma}(x;\theta)
$$
for any $1 \le \gamma  \le d$.
So, to prove the desired result, it suffices to show that 
\begin{equation} \label{eq:show_blowsup}
\frac{1}{N_c} \big[\Leps g_{\gamma}(x_{\alpha};\theta) \big]^2  \to \infty
\end{equation}
as $\epsilon \downarrow 0$. Using \eqref{eq:Leps_expanded_1d}, the left-hand side 
of \eqref{eq:show_blowsup} equals 
\begin{align}
\frac{1}{N_c} \Big[ 
\Big( a(x_{\alpha}/\epsilon) \frac{d^2}{dx^2} &g_{\gamma}(x_{\alpha}; \theta)\Big)^2 + 
\frac{2}{\epsilon} \Big(a(x_{\alpha}/\epsilon) \frac{d^2}{dx^2} g_{\gamma}(x_{\alpha}; \theta)
\, a'(x_{\alpha}/\epsilon)  \frac{d}{dx} g_{\gamma}(x_{\alpha}; \theta)\Big)\nonumber  \\
& +  \frac{1}{\epsilon^2} \Big(a'(x_{\alpha}/\epsilon) \frac{d}{dx} g_{\gamma}(x_{\alpha};\theta)\Big)^2\Big].
\label{eq:final_blowup}
\end{align}
Since $a$ is a periodic $C^1$ function, both $a(x_{\alpha}/\epsilon)$ and $a'(x_{\alpha}/\epsilon)$ are bounded 
independent of $x_{\alpha}/\epsilon \in\mathbb{R}$. The derivatives 
of $g_{\gamma}$ are
\begin{equation}\label{eq:g_1stderiv}
\frac{d}{dx}\, g_{\gamma}(x;\theta) = 
\sigma' \Big(W_{\gamma}^{(0)} x + b_{\gamma}^{(0)} \Big) W_{\gamma}^{(0)}
\end{equation}
and
\begin{equation}\label{eq:g_2ndderiv}
\frac{d^2}{dx^2}\, g_{\gamma}(x;\theta) = 
\sigma'' \Big(W_{\gamma}^{(0)} x + b_{\gamma}^{(0)} \Big) \big(W_{\gamma}^{(0)}\big)^2. 
\end{equation}
Since the activation function is smooth and the network parameters are uniformly bounded in $\epsilon$ and $t$, 
both \eqref{eq:g_1stderiv} and \eqref{eq:g_2ndderiv} are also bounded for all $x_{\alpha} \in [a,b]$. 
Consequently, \eqref{eq:final_blowup} is dominated by the $1/\epsilon^2$ term for $\epsilon$
vanishingly small. The positivity assumption \eqref{eq:sigma_assumption} on $d\sigma/dx$
and the uniform boundedness of the network parameters imply also that the (absolute value
of the) first derivative
\eqref{eq:g_1stderiv} is bounded below by a constant independent of $\epsilon$; 
this implies that for any $t \in \Upsilon$
\begin{equation*}
\lim_{\epsilon \downarrow 0} \frac{1}{\epsilon} \left| \frac{d}{dx}\, g_l(x;\theta)\right| = \infty
\end{equation*}
holds for at least one $l \in \{1,\ldots d\}$. Take $\gamma = l$, and let $\epsilon$ vanish 
monotonically to zero in such a way that $a'(x_{\alpha}/\epsilon) \ne 0$ for any $\epsilon$ (such a sequence 
exists since $a$ is periodic and non-constant). Then \eqref{eq:final_blowup} indeed limits to positive 
infinity, giving the desired result.

\end{proof}               

Without giving a formal proof, we now briefly describe sufficient conditions under which Theorem \ref{thm:main_thm}
could be extended to the more general case $n >1$ (assuming one retains the positivity assumption \eqref{eq:sigma_assumption}
on $d\sigma/dx$ and the uniform boundedness assumption \eqref{eq:assumption2} on the network parameters). 
A neural network with one hidden layer of width $d$ would be 
$$
u(x;\theta) = \sum_{k=1}^d W_k^{(1)} \sigma\Big( \sum_{l=1}^n W^{(0)}_{kl} x_l + b^{(0)} \Big) + b^{(1)} ;
$$ 
here $x_l$ denotes the $l$-th component of the function input $x \in \mathbb{R}^n$.
Following the same argument just given, define 
$$
g_{k}(x; \theta) = \sigma\Big( \sum_{l=1}^n W^{(0)}_{kl} x_l + b^{(0)} \Big),
$$
and assume that there exists some $1 \le \mu \le d$ and $1 \le \nu \le n$ such that 
$W^{(0)}_{\mu \nu}$ is asymptotically larger than $\epsilon$ (as before, if no such 
entries exist, then $W^{(0)}$ limits to the zero matrix as $\epsilon \downarrow 0$,
and hence $u(x;\theta)$ limits to a constant function). 
The Frobenius norm of the PINN's neural tangent kernel  
matrix will blow-up if $[\Leps g_{\mu}(x;\theta)]^2$ blows-up at
some collocation point $x$. This can occur if the entries for which $\nabla \cdot a$ 
are nonzero coincide with the entries where $W^{(0)}$ is larger than $\bigoh{\epsilon}$; 
more precisely, if there is a sequence $\epsilon\downarrow 0$ such that    
$$
\sum_{i=1}^n \pderiv{a_{i\nu}}{x_i} \Big( \frac{x}{\epsilon}\Big) 
\sigma'\Big( \sum_{l=1}^n W_{\mu l}^{(0)} x_{l} + b_{\mu}^{(0)} \Big) W_{\mu \nu}^{(0)} \ne 0
$$
for any collocation point $x$, then the main result will generalize.

Returning to the case of one-dimension ($n=1$), we now extend Theorem \ref{thm:main_thm}
to more general fully connected neural networks under an additional 
assumption on the network parameter's asymptotic behavior.
%
\begin{thm}\label{thm:second_thm}             
Let $u: \mathbb{R} \to \mathbb{R}$ be a fully connected neural network 
with $\Lambda$ hidden layers of widths $d_1, d_2, \ldots, d_{\Lambda}$, 
so that 
\begin{align}
u^{(0)}(x) &= x,    \nonumber \\
u^{(l)}(x) &= \sigma\big( W^{(l)} u^{(l-1)}(x) + b^{(l)}\big), \qquad 1 \le l \le \Lambda, \nonumber\\
u(x; \theta) &= W^{(\Lambda + 1)} \cdot u^{(\Lambda)}(x) + b^{(\Lambda+1)}.  \label{eq:full_nn}
\end{align}
If all of the assumptions from Theorem \ref{thm:main_thm} are retained, but $\Upsilon$ 
is now defined to be the set of $t \in [0,T]$ such that the magnitude of every entry of each matrix
$W^{(l)}$, $1 \le l \le \Lambda$, 
are asymptotically larger than $\epsilon^{1/\Lambda}$, 
then $\forall t \in \Upsilon$
$$
\lim_{\epsilon \downarrow 0} \norm{\Keps(t)}_{F} = \infty, 
$$
where $\Keps$ is the NTK matrix
associated to the loss function \eqref{eq:loss_eps} of the 
boundary value problem \eqref{eq:1d_darcy}.
\end{thm}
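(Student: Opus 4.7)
The plan is to follow the same structural argument as Theorem \ref{thm:main_thm}: isolate a single diagonal entry $(\Keps_{uu})_{\alpha\alpha} = \frac{1}{N_c} \sum_{\gamma=1}^{N_p} \big(\pd_{\theta_\gamma} \Leps u(x_\alpha; \theta)\big)^2$, then exhibit one parameter index $\gamma$ for which that summand diverges. A natural choice is an output-layer weight, $\theta_\gamma = W^{(\Lambda+1)}_k$, because then by linearity
\[
\pderiv{}{W^{(\Lambda+1)}_k} \Leps u(x_\alpha;\theta) = \Leps u^{(\Lambda)}_k(x_\alpha),
\]
and so the cross-term expansion of \eqref{eq:Leps_expanded_1d} reproduces verbatim the three-term structure \eqref{eq:final_blowup}, with the critical $1/\epsilon^2$ contribution
$\frac{1}{\epsilon^2}\big[a'(x_\alpha/\epsilon)\,(u^{(\Lambda)}_k)'(x_\alpha)\big]^2$.
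The reduction is therefore: show that for some $k$ one has $|(u^{(\Lambda)}_k)'(x_\alpha)|$ asymptotically larger than $\epsilon$, and check that the remaining $\bigoh{1/\epsilon}$ cross term cannot conspire to cancel the $1/\epsilon^2$ one (here the sign freedom in choosing a subsequence with $a'(x_\alpha/\epsilon)\neq 0$, which was used in the single-layer proof, carries over unchanged).

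To estimate $(u^{(\Lambda)}_k)'$, I would iterate the chain rule through the hidden layers: writing $D^{(l)}(x)$ for the diagonal matrix with entries $\sigma'\!\big(W^{(l)} u^{(l-1)}(x)+b^{(l)}\big)$,
\[
(u^{(\Lambda)})'(x) \;=\; D^{(\Lambda)}(x)\,W^{(\Lambda)}\, D^{(\Lambda-1)}(x)\,W^{(\Lambda-1)} \cdots D^{(1)}(x)\,W^{(1)}.
\]
By the uniform parameter bound \eqref{eq:assumption2} the arguments of every $\sigma'$ are confined to a compact set, and by \eqref{eq:sigma_assumption} together with continuity of $\sigma'$ there is a uniform lower bound $\delta>0$ on each diagonal entry of every $D^{(l)}(x)$. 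Thus each of these matrices is bounded in norm both from above and from below away from zero.

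Expanding the matrix product as a sum over network paths $p=(i_0=1, i_1, \ldots, i_{\Lambda-1}, i_\Lambda=k)$ gives
\[
(u^{(\Lambda)}_k)'(x) \;=\; \sum_{p} \prod_{l=1}^{\Lambda} \sigma'\!\big(\cdot\big)_{i_l}\, W^{(l)}_{i_l\, i_{l-1}}.
\]
Under the refined hypothesis defining $\Upsilon$, each weight factor has magnitude asymptotically larger than $\epsilon^{1/\Lambda}$, so each individual path product has magnitude asymptotically larger than $\delta^{\Lambda}\epsilon^{\Lambda/\Lambda} = \delta^{\Lambda}\epsilon$. This is precisely the threshold at which the $1/\epsilon^2$ term in the expansion of $(\pd_{\theta_\gamma}\Leps u)^2$ diverges.

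The hard part, and where the proof is genuinely more delicate than the $\Lambda=1$ case, is ruling out catastrophic cancellation between the different path products, whose signs are controlled only by the signs of the weights. To finish the argument I would argue that at least one index $k\in\{1,\ldots,d_\Lambda\}$ must survive such cancellation: if every component of $(u^{(\Lambda)})'(x_\alpha)$ were of magnitude $\bigoh{\epsilon}$ or smaller, then by backsubstituting $(u^{(\Lambda)})' = D^{(\Lambda)}W^{(\Lambda)}(u^{(\Lambda-1)})'$ and using invertibility of $D^{(\Lambda)}$ together with the entrywise lower bound on $W^{(\Lambda)}$, one could propagate the smallness back down the network to conclude $(u^{(1)})' = \bigoh{\epsilon}$, which contradicts $(u^{(1)}_j)'=\sigma'(\cdot)W^{(1)}_j$ with $|W^{(1)}_j|\gg\epsilon^{1/\Lambda}\gg\epsilon$. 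Selecting such a surviving $k$ and then choosing a vanishing subsequence of $\epsilon$ along which $a'(x_\alpha/\epsilon)\neq 0$, the $1/\epsilon^2$ term dominates as in the proof of Theorem \ref{thm:main_thm}, giving $\lim_{\epsilon\downarrow 0}\|\Keps(t)\|_F=\infty$ on $\Upsilon$.
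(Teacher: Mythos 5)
Your proposal mirrors the paper's proof closely: differentiate with respect to an output-layer weight so that $\partial_{W^{(\Lambda+1)}_k}\Leps u = \Leps u^{(\Lambda)}_k$, expand $\Leps$ to extract the $1/\epsilon^2$ term, and iterate the chain rule through the hidden layers to express $(u^{(\Lambda)}_k)'$ as a sum over paths of products of $\sigma'$ factors and weight-matrix entries. The reorganization via the diagonal matrices $D^{(l)}$ is a cleaner bookkeeping device for what the paper writes out in Einstein notation; the substance is the same.

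Where you genuinely depart from the paper is in noticing that the derivative is a \emph{sum} over paths, and that lower bounds on the magnitude of each summand do not by themselves yield a lower bound on the magnitude of the sum. This is a real gap: the paper's proof simply asserts that because each path product is asymptotically larger than $\epsilon$, the quantity $\tfrac{1}{\epsilon}\big|\tfrac{d}{dx}u^{(\Lambda)}_\gamma\big|$ diverges, without ruling out cancellation. You are right to flag this.

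However, the fix you propose does not close the gap. The backsubstitution argument hinges on being able to pass from $\norm{W^{(\Lambda)}(u^{(\Lambda-1)})'}=\bigoh{\epsilon}$ to $\norm{(u^{(\Lambda-1)})'}=\bigoh{\epsilon}$, and this requires a lower bound on the smallest singular value of $W^{(\Lambda)}$, not an entrywise lower bound on its magnitude. A matrix all of whose entries equal $1$ has every entry bounded below, yet it is rank one; and when $d_l \neq d_{l-1}$ the matrix $W^{(l)}$ is not even square, let alone invertible. The hypothesis defining $\Upsilon$ constrains entries, not singular values, so the propagation of smallness down the network is not licensed. To make your argument rigorous one would need either to strengthen the definition of $\Upsilon$ (e.g.\ to require that each $W^{(l)}$ be uniformly well-conditioned / have smallest singular value asymptotically larger than $\epsilon^{1/\Lambda}$), or to add a genericity assumption that the path-product sum does not experience catastrophic cancellation along the training trajectory. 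As written, your proof inherits the same unresolved issue as the paper's, and the attempted repair introduces an unjustified step.
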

The proof proceeds nearly identically to that of Theorem \ref{thm:main_thm}, and hence
it is in Appendix \ref{sec:appendix2}. 

An immediate corollary of Theorem \ref{thm:second_thm} is that the spectral radius of $\Keps_{uu}$ 
must also blow up as $\epsilon \downarrow 0$; in contrast, because there is no explicit 
$\epsilon$-dependence in the $K_{bb}$ subblock, one expects its spectral radius to be bounded
independently of $\epsilon$, which is indeed numerically observed in section \ref{subsec:ntk_scaling}.
\begin{cor} \label{cor:spectral_radius}

Let $\rho(\Keps_{uu}(t))$ denote the spectral radius of $\Keps_{uu}(t)$. 
Under the same assumptions as Theorem \ref{thm:second_thm}, we have for any $t \in \Upsilon$ that
$$
\lim_{\epsilon\downarrow 0} |\rho(\Keps_{uu}(t)) | = \infty .
$$
\end{cor}
See Appendix \ref{sec:appendix3} for the simple proof.

\subsection{Discussion}
\label{subsec:discussion}
Fundamentally, using PINNs to numerically solve PDEs involves optimizing a multiobjective loss functional, 
for example of the form 
$$
\mathcal{L}(\theta)
= \mathcal{L}_{\rm PDE}(\theta) + \mathcal{L}_{\rm BC}(\theta)
$$
for boundary values problems. 
The present work builds on the previous studies \cite{wang2021understanding}, \cite{wang2021eigenvector} 
and \cite{wang2022_JCP}, where certain PINN failure 
modes were identified for multiscale problems that, broadly speaking, involve inbalances between $\mathcal{L}_{\rm PDE}$ and $\mathcal{L}_{\rm BC}$.

These previous analyses focused on unsatisfactory PINN training in the particular case of Poisson-type PDEs, i.e.\ 
where the differential operator is the Laplacian (or, in the one-dimensional case, the second derivative). 
The multiscale character of the problems originated from oscillatory forcing functions; an illustrative example is the boundary value
problem
\begin{equation} \label{eq:illust_examp}
-\frac{d^2}{dx^2} \ueps(x) = \sin(x) + (1/\epsilon) \sin(x/\epsilon)
\end{equation}
on $[0,1]$ along with appropriate boundary conditions (see e.g.\ Eq.~(2.6) in \cite{wang2021eigenvector}).
The present work focuses on elliptic problems that feature a more general class of divergence-form differential operators
with oscillatory coefficients $\aeps(x)$, as in Eq.~\eqref{eq:darcys_law}. The differences between the two cases can 
be considerable. 

For example, from \cite{wang2021understanding} it is known that PINNs can fail to train when the magnitudes of the 
gradients of the two different loss components ($\mathcal{L}_{\rm PDE}$ and $\mathcal{L}_{\rm BC}$) with respect to 
the network parameters $\theta$ are imbalanced, i.e.\ when either $|\nabla_{\theta} \mathcal{L}_{\rm PDE}| \gg |\nabla_{\theta} \mathcal{L}_{\rm BC}|$ or 
$|\nabla_{\theta} \mathcal{L}_{\rm PDE}| \ll |\nabla_{\theta} \mathcal{L}_{\rm BC}|$. For multiscale Poisson equations 
such as \eqref{eq:illust_examp}, the $1/\epsilon$ term in the right-hand side forcing results in $|\nabla_{\theta} \mathcal{L}_{\rm PDE}|$ scaling like $1/\epsilon$. 
In contrast, for Darcy-type problems such as \eqref{eq:darcys_law} (or, in one-dimension, \eqref{eq:1d_darcy}), 
$|\nabla_{\theta} \mathcal{L}_{\rm PDE}|$ is more singular; it scales like $1/\epsilon^2$. In both cases, 
$|\nabla_{\theta} \mathcal{L}_{\rm BC}|$ is indepedent of $\epsilon$, so the discrepancy is greater for problems considered 
in the present work. 

An alternative PINN failure mode analyzed in \cite{wang2021eigenvector} and \cite{wang2022_JCP} is when the
the eigenvalues of the different NTK matrix subblocks differ greatly in modulus. 
The authors reported a large imbalance between the eigenvalues of the $K_{uu}$ and $K_{bb}$ subblocks  
for Poisson-style problems. However, for equations such as \eqref{eq:illust_examp}, it is clear from 
the definitions in \eqref{eq:Keps} that the $K_{uu}$ subblock is independent of $\epsilon$, since the forcing function 
$f$ (which is responsible for the equation's multiscale character)
is independent of $\theta$.

In contrast, 
the results from Section \ref{subsec:main_theory} show that while the $K_{bb}$ subblock is independent of $\epsilon$ for
Darcy-type problems, the spectral radius of the $K_{uu}$ subblock increases as $1/\epsilon^2$. Hence, for problems with 
large scale separation, the imbalance in eigenvalues is even more severe than in the Poisson case. 
%
For this reason,  
training PINNs with gradient based optimizers
to achieve an accurate approximation to solutions to multiscale equations of the form \eqref{eq:darcys_law} becomes
increasingly difficult as a function of increasing problem scale separation. 

A common strategy 
to deal with the imbalances just highlighted is to assign the different loss components 
$\mathcal{L}_{\rm PDE}$ and $\mathcal{L}_{\rm BC}$ different weights $\lambda_{\rm PDE}$ and $\lambda_{\rm BC}$; 
this was 
proposed e.g.\ in \cite{wang2021understanding}, \cite{van2022optimally} and \cite{wang2022_JCP}.
It is reasonable, for example, for the equations considered here to try and weight $\mathcal{L}_{\rm PDE}$ 
by $\lambda_{\rm PDE} = \epsilon^2$ to cancel the $1/\epsilon^2$ factor in the NTK $\Keps_{uu}$ subblock. 
Although this of course changes the magnitude of the eigenvalues of the NTK subblock, it does not change their
distribution, and hence cannot directly resolve spectral bias \cite{wang2022_JCP}, 
which may explain the lack of success using adaptive weighting strategies that was reported in both 
\cite{leung2021nh} and \cite{han2022neural}. The numerical examples in Section \ref{subsec:rescale_test} below 
illustrate that even under this rescaling, the stiffness of the gradient flow dynamics described by \eqref{eq:grad_flow} 
remains, translating to poor PINN performance.

\section{Numerical results} 
\label{sec:numerical_results}
\subsection{Scaling of the neural tangent kernel matrix}\label{subsec:ntk_scaling}
Theorems 
\ref{thm:main_thm} and \ref{thm:second_thm}  
state that the Frobenius norm of the neural tangent kernel (NTK) matrix $\Keps$ associated to the 
PINN approximation of the multiscale BVP \eqref{eq:1d_darcy} should scale inversely 
proportional to $N_c \,  \epsilon^2$ for gradient descent training with infinitesimal 
time steps.
Here we numerically reproduce this scaling for a neural network trained in a more practical 
setting, namely with the Adam optimizer \citep{kingma2014adam}. 

As in Theorems \ref{thm:main_thm} and \ref{thm:second_thm}, we focus on the $\Keps_{uu}$ 
subblock of the NTK matrix. For a neural network with one hidden layer $d$, the $ij$-th 
entry of $\Keps_{uu}$ is given by
\begin{align*}
\big(\Keps_{uu}\big)_{ij}(t) = 
\sum_{k=1}^{d} \Big( & \pderiv{}{W_k^{(1)}} \Leps u(x_i; \theta(t))\pderiv{}{W_k^{(1)}} \Leps u(x_j; \theta(t)) 
+ \pderiv{}{W_k^{(0)}} \Leps u(x_i; \theta(t))\pderiv{}{W_k^{(0)}} \Leps u(x_j; \theta(t)) \\ 
&+ \pderiv{}{b_k^{(0)}} \Leps u(x_i; \theta(t))\pderiv{}{b_k^{(0)}} \Leps u(x_j; \theta(t)) \Big)
\end{align*}
(note $\partial \Leps u(x;\theta)/\partial b^{(1)} = 0$). 
For network width $d = 50$ and $N_c = 256$ collocation points, 
Figure \ref{fig:ntk_scaling}(a) shows the Frobenius
norm $\norm{\Keps_{uu}}_F$ associated to \eqref{eq:1d_darcy} for the particular case that $[a,b]=[-\pi,\pi]$, 
$u_a = u_b = 0$, and $\aeps(x) = 1/(2.1 + 2\sin(2\pi x/\epsilon))$. The norm is computed at time $t=0$, i.e. 
at initialization, for a sequence of $\epsilon$ values between $1/10$ and $1/100$. For every $\epsilon$ value, 
 the neural network parameters are initialized from the normal distribution
with mean zero and unit variance. Figure \ref{fig:ntk_scaling}(b) shows the Frobenius norms after initializing
with the Glorot distribution \citep{glorot2010understanding} and then training for ten thousand 
iterations with the Adam optimizer at a learning rate of $\eta=10^{-5}$.
In both cases, the norm increases as $1/\epsilon^2$, consistent with the theory developed above.

\begin{figure}[h]       
    \centering
(a)
 \includegraphics[width=0.45\textwidth]{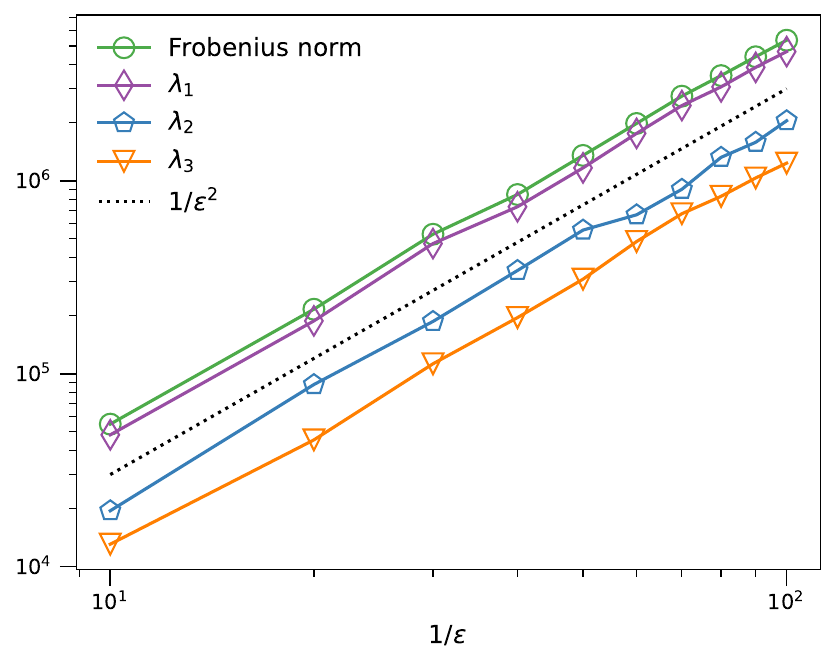}
(b)
 \includegraphics[width=0.45\textwidth]{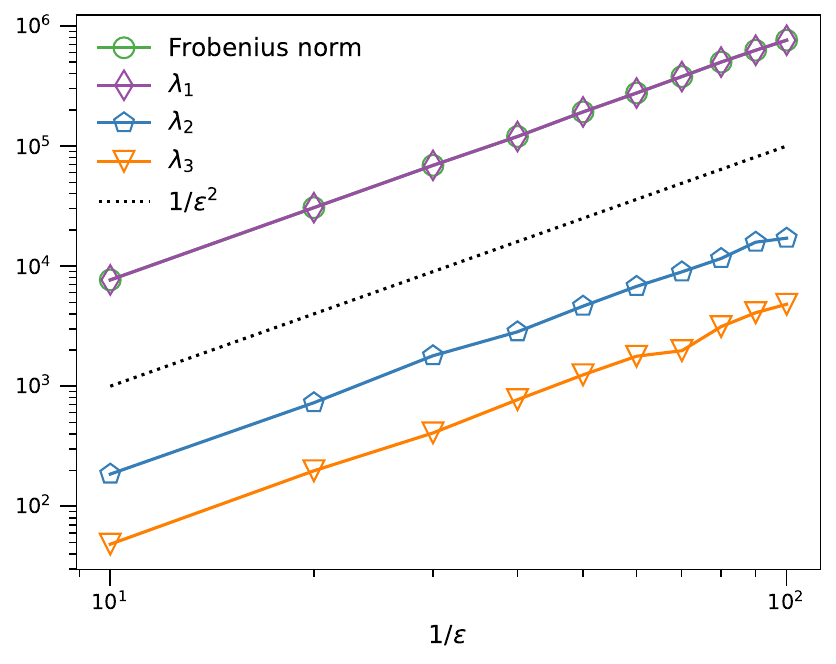}
\caption{Scaling of the Frobenius norm of the NTK matrix and the first three principal eigenvalues
(a) at initialization and (b) after training with Adam optimizer.
} 
\label{fig:ntk_scaling}
\end{figure}

Recall that the Frobenius norm of a square matrix is equal to the Euclidean norm of its singular values. 
Since $\Keps_{uu}$ is a symmetric matrix, its singular values equal the absolute value of its
(real valued) eigenvalues. Hence, Theorems \ref{thm:main_thm} and \ref{thm:second_thm} imply that 
$\sum_{i=1}^{N_c} (\lambda^{\epsilon}_i)^2 \to \infty$ as $\epsilon$ vanishes, 
where $\lambda^{\epsilon}_i$ are the eigenvalues of $\Keps_{uu}$.  Figures \ref{fig:ntk_scaling}(a) 
and \ref{fig:ntk_scaling}(b) further suggest that, for the BVP just described, the first few 
principal eigenvalues (denoted $\lambda_1, \lambda_2$, and $\lambda_3$) 
 also increase at a 
rate proportional to $1/\epsilon^2$, both at initialization and after training with the Adam optimizer.  
In contrast, the spectral radius of the $K_{bb}$ subblock (not shown) is essentially independent
of $\epsilon$. 

\begin{figure}[h]       
    \centering
(a)
 \includegraphics[width=0.45\textwidth]{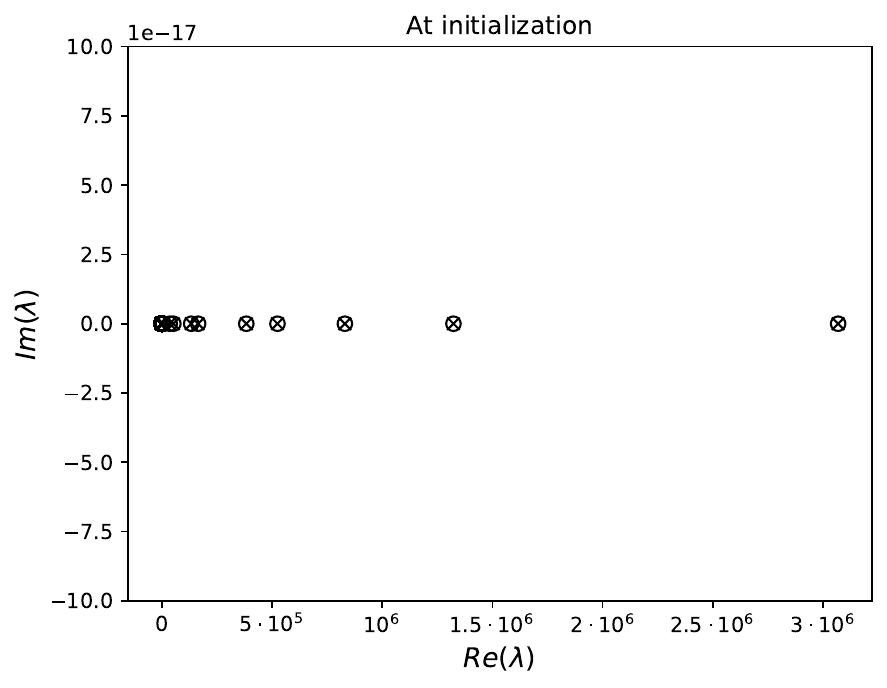}
(b)
 \includegraphics[width=0.45\textwidth]{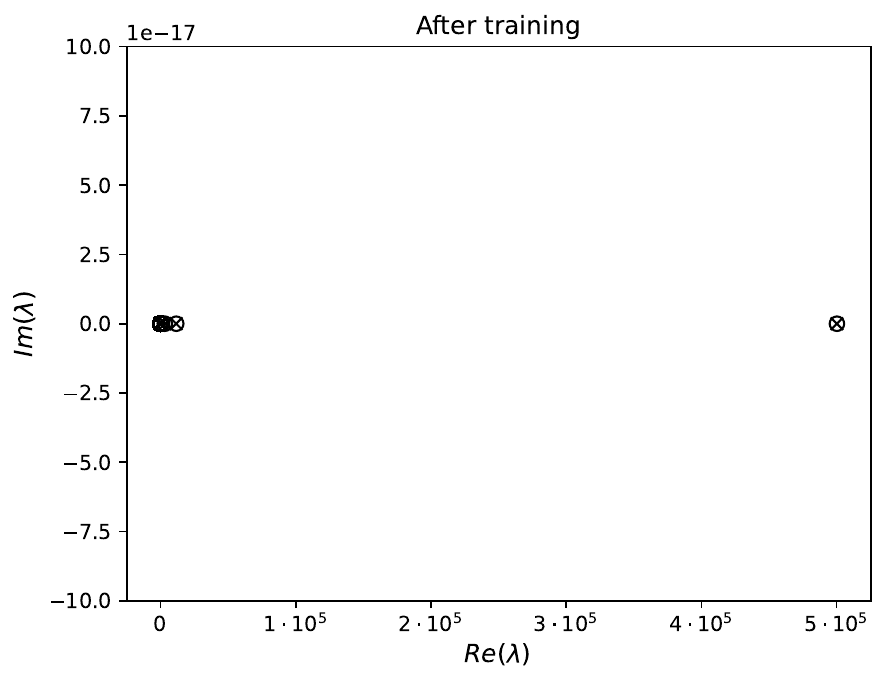}
\caption{ 
The eigenspectrum of $\Keps_{uu}$ (a) at initialization and (b) after training. 
} 
\label{fig:ntk_spectra}
\end{figure}

For the particular case $\epsilon =1/80$, Figure \ref{fig:ntk_spectra} shows the full spectra
of $\Keps_{uu}$ both at initialization and after training. 
At initialization, there are a handful of large eigenvalues, but the rest are clustered around
the origin; after training, there is a single large eigenvalue well-separated from the cluster
near the origin. In both cases 
the ratio $\lambda^{\epsilon}_{\rm max}/\lambda^{\epsilon}_{\rm min}$ is quite large, 
indicating that \eqref{eq:y_evolution} is a stiff initial value problem.

\subsection{Learning a two-scale function}\label{subsec:twoscale_learning}
Consider next the two-scale function 
\begin{equation}\label{eq:two_scale_fn}
\ueps(x) = \sin(2x) + \epsilon \sin(x/\epsilon) - \frac{\epsilon}{\pi} \sin(\pi/\epsilon) \, x.
\end{equation}
This function satisfies the Poisson BVP 
\begin{equation}\label{eq:poisson}
-\frac{d^2}{dx^2} \ueps(x) = 4\sin(2x) + \frac{1}{\epsilon} \sin(x/\epsilon), \qquad -\pi < x < \pi
\end{equation}
with homogeneous Dirichlet boundary conditions 
$\ueps(-\pi) = \ueps(\pi) = 0$;  
it additionally satisfies the elliptic, Darcy BVP given by 
\begin{equation}\label{eq:darcy}
-\frac{d}{dx}\Big(a(x/\epsilon) \frac{d}{dx} \ueps(x) \Big) = f^{\epsilon}(x),
\end{equation}
for $x \in (-\pi,\pi)$, again with homogeneous Dirichlet conditions, where  
\begin{equation*}
a(x/\epsilon) = \big( 2.1 + 2\sin(x/\epsilon)\big)^{-1}
\end{equation*}
and the expression for the right-hand side forcing $f^{\epsilon}(x)$ is listed in Appendix \ref{sec:appendix4}.

We describe three different neural network approximations to the two-scale function $\ueps(x)$ 
by:  
\begin{itemize}
\item[(1)] regression (i.e. supervised learning),
\item[(2)] a PINN solver for the Poisson problem \eqref{eq:poisson}, and
\item[(3)]  a PINN solver for the Darcy problem \eqref{eq:darcy}.
\end{itemize}
We first report the results for each case when $\epsilon=1/32$ and then 
repeat the experiments in Section \ref{subsec:decrease_epsilon} at smaller
values of $\epsilon$ to observe the behavior of each approximation as a function 
of increasing scale separation.

 For $\epsilon = 1/32$, the first two methods listed above generate satisfactory approximations
to \eqref{eq:two_scale_fn}, indicating that (a good approximation to) the target function lives
in the ``span'' of the neural network used; the third method does not. 
All of the numerical 
tests are implemented in the open-source DeepXDE package \citep{lu2021deepxde}, 
and in each case, the neural network weights and biases are randomly
 initialized with the Glorot distribution \citep{glorot2010understanding}. 

Consider first approximating $\ueps(x)$ with a neural network  
by regression, which we denote as $u_R(x;\theta)$. For $\{x_i\}_{i=1}^N \in [-\pi, \pi]$, the 
network parameters are determined by minimizing
\begin{equation}\label{eq:regression_loss}
\mathcal{L}^{\epsilon}(\theta) = \frac{1}{N} \sum_{i=1}^N \big( u_{\rm R}(x_i;\theta) - \ueps(x_i)\big)^2.
\end{equation}
We set $\epsilon =1/32$ and use a neural network with hyperbolic tangent activation 
functions and four hidden layers, each of width $d=40$. Using $N = \ceil{2\pi/(\epsilon/2)} = 403$ 
equispaced training points, the network is trained with the Adam \citep{kingma2014adam}
and L-BFGS \citep{byrd1995limited} optimizers for ten independent trials. In each test, Adam is 
used for $20,000$ and $10,000$ iterations with learning rates
of $\eta =10^{-2}$ and $\eta=10^{-3}$, respectively, and L-BFGS is then used, on average, for 
approximately $3600$ steps. 

Let $ \expval \, u_{\rm R}(x;\theta)$ denote the neural network averaged over all ten trials. 
Taking $\{x_i\}_{i=1}^{1000}$ to be a set of equispaced points from $-\pi$ to $\pi$, 
Figures \ref{fig:regression_and_poisson}(a) and (b) show the resulting approximation and error, respectively. 
The maximum error is approximately
$$ 
 \max_{x_i} \left| e_{\rm R} \right| := \max_{x_i} \big| \ueps(x_i) - \expval\, u_{\rm R}(x_i; \theta)\big| \approx 2.65\cdot 10^{-3}, 
$$
and the average variance in the error over the interval $[-\pi,\pi]$ is about $3\cdot 10^{-7}$.

\begin{figure}[h]       
    \centering
(a)
 \includegraphics[width=0.45\textwidth]{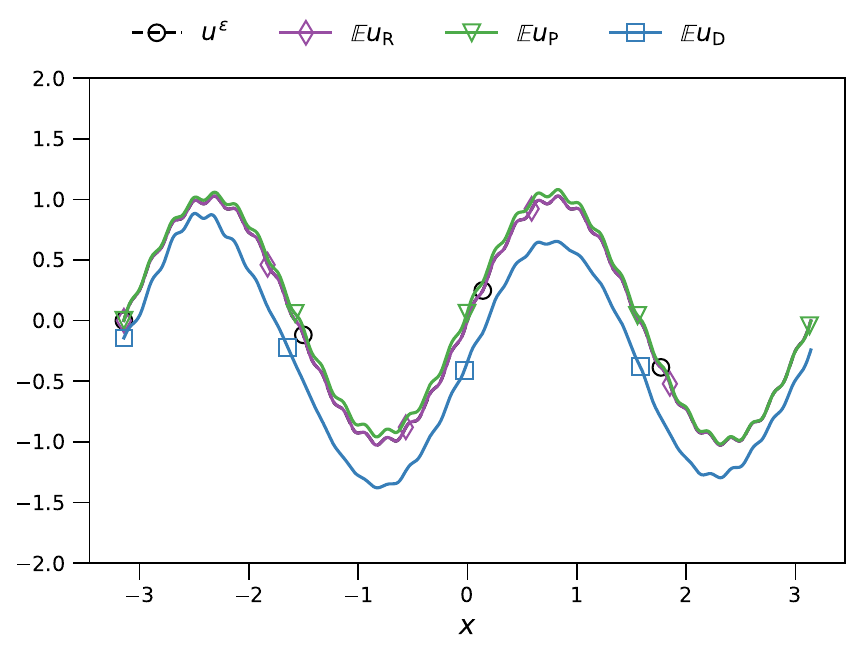}
(b)
 \includegraphics[width=0.45\textwidth]{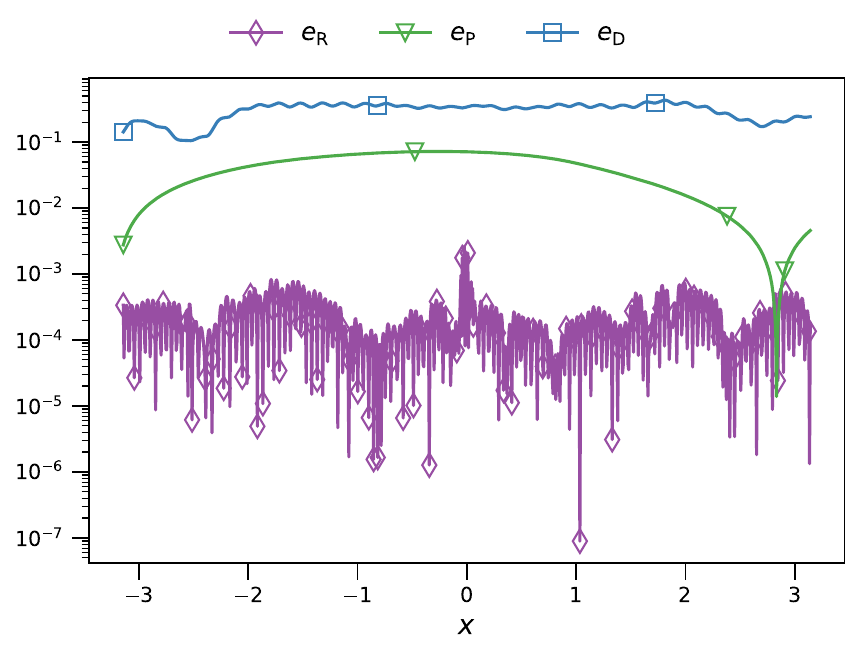}
\caption{(a) For $\epsilon =1/32$, the two-scale function $\ueps(x)$ given by \eqref{eq:two_scale_fn} and its approximations
by regression ($\mathbb{E}\, u_{\rm R}$), a Poisson PINN ($\mathbb{E}\,u_{\rm P}$) and a Darcy PINN ($\mathbb{E}\,u_{\rm D}$); in each case the neural network
 contains four hidden layers of width $d=40$. 
(b) The absolute value of corresponding generalization 
errors $e_{\rm R}$, $e_{\rm P}$ and $e_{\rm D}$ at 1000 equispaced points from $-\pi$ to $\pi$. 
} 
\label{fig:regression_and_poisson}
\end{figure}

Next, consider approximating the solution to the Poisson problem \eqref{eq:poisson} with a PINN $u_{\rm P}(x;\theta)$. 
The loss function is 
\begin{equation}\label{eq:loss_poissonn}
\losseps(\theta) = \frac{1}{N_c} \sum_{i=1}^{N_c} \frac12 \Big( \frac{d^2}{dx^2} u_{\rm P}(x_i;\theta) + (4\sin(2x_i) + \frac{1}{\epsilon} \sin(x_i/\epsilon))\Big)^2
+ \frac12 \Big( [u_{\rm P}(-\pi; \theta)]^2 + [u_{\rm P}(\pi; \theta)]^2 \Big).
\end{equation}
For $\epsilon=1/32$ and same neural network as before, we train with $N_c = 1024$ equispaced collocation 
points for ten independent trials. The training schedule consists of 
$10,000$, $10,000$, $20,000$, and $20,000$ iterations of Adam with learning
rates of $\eta = 10^{-4}$, $\eta = 10^{-5}$, $\eta = 10^{-6}$, and $\eta = 10^{-7}$, respectively, and then 
L-BFGS for a few thousand iterations, on average. 

 Figures \ref{fig:regression_and_poisson}(a) and (b) show the resulting approximations 
and the generalization errors of the neural networks after training. 
Taking again $\expval\, u_{\rm P}(x;\theta)$ to be the network averaged over all the trials, 
the maximum generalization error is 
\begin{equation*}
 \max_{x_i} \left| e_{\rm P} \right| := \max_{x_i} \left| \ueps(x_i) - \expval\, u_{\rm P}(x_i;\theta)\right| \approx 7.20\cdot 10^{-2}, 
\end{equation*}
while the average variance across the interval is about $1.12\cdot 10^{-1}$. 

Finally, consider the PINN approximation $u_{\rm D}(x;\theta)$ of the Darcy problem \eqref{eq:darcy}, where the loss function 
is 
\begin{equation}\label{eq:loss_darcy}
\losseps(\theta) = \frac{1}{N_c} \sum_{i=1}^{N_c} \frac12 \Big[ \frac{d}{dx}\Big(a(x_i/\epsilon) \frac{d}{dx}u_{\rm D}(x_i;\theta)\Big) + f^{\epsilon}(x_i) \Big]^2
+ \frac12 \Big( [u_{\rm D}(-\pi; \theta)]^2 + [u_{\rm D}(\pi; \theta)]^2 \Big).
\end{equation}
Once again, $\epsilon=1/32$, and the network architecture and number of collocation points is the same as 
the Poisson case. The training schedule consists of $10,000$, $10,000$, $20,000$, and $20,000$ iterations of Adam with learning
rates of $\eta = 10^{-3}$, $\eta = 10^{-4}$, $\eta = 10^{-5}$, and $\eta = 10^{-6}$, respectively, and then 
L-BFGS for a few thousand iterations, on average. The maximum generalization error 
\begin{equation*}
  \max_{x_i} \left| e_{\rm D} \right| := \max_{x_i} \left| \ueps(x_i) - \expval\, u_{\rm D}(x_i;\theta)\right| \approx 4.31\cdot 10^{-1}, 
\end{equation*}
which is about 40\%, and the average variance across the interval is nearly as large--about $2.83\cdot 10^{-1}$.
From Figures \ref{fig:regression_and_poisson}(a) and (b) it can be seen that the error contains both low and
high frequency components.

\subsection{Results for decreasing $\epsilon$}\label{subsec:decrease_epsilon}
We now consider the results for the three different neural network approximations to \eqref{eq:two_scale_fn}
 at the values $\epsilon = 1/64$ and $\epsilon = 1/128$. 

Consider first the case of simple $L^2$ regression (i.e.\ minimizing \eqref{eq:regression_loss}).  
For both values of $\epsilon$, 
$N = \ceil{2\pi/(\epsilon/2)}$ equispaced training points are used for a network with four hidden layers, 
each of width $d=40$. Ten independent trials are conducted where 
the network is trained with the Adam optimizer
for 20,000 and 10,000 iterations at a learning rate of $\eta = 10^{-3}$ and $\eta= 10^{-4}$, respectively. The 
generalization error is tested at 1000 equispaced points in $[-\pi,\pi]$. 

The results (not shown) are consistent with the frequency principle, i.e.\ the known spectral bias of neural networks in the 
regression setting \cite{xu2020}. The $\infty$-norm of the average generalization error is about $1.71\cdot 10^{-2}$
and $8.88\cdot 10^{-3}$ for $\epsilon=1/64$ and $1/128$, respectively, while in both cases the maximum pointwise 
variance is relatively small, on the order of $10^{-6}$. The Fourier transform of the error in both cases is essentially zero everywhere 
except at the $1/\epsilon$ frequency, as expected. The techniques proposed in \cite{yang2021overcoming} for example 
could be used to mitigate this issue in this setting, if desired. 

For the Poisson and Darcy PINNs, we consider a neural network with eight hidden layers, each 
of width $d=40$. The training schedule consists of 2000 iterations of Adam with a learning rate of $\eta = 10^{-6}$,
5000 iterations with $\eta = 10^{-7}$, 10000 iterations with $\eta=10^{-8}$ and then L-BFGS. 
 The pointwise generalization errors for the two cases $\epsilon=1/64$ and $\epsilon=1/128$ are 
shown in Figures \ref{fig:poisson_smaller_eps} and \ref{fig:darcy_smaller_eps} in Appendix \ref{sec:appendix5} 
for the Poisson and Darcy PINNs, respectively. 
The $\infty$-norms of the average generalization error for each $\epsilon$ case are reported in Tables \ref{table:errors_poisson}; 
and \ref{table:errors_darcy} for the Poisson and Darcy PINNs, respectively. Also reported are the 
maximum average errors on the boundary $\partial \Omega = \{-\pi,\pi\}$, as well as the $\infty$-norm of the
 variance in the generalization error throughout $\Omega$ and on $\partial \Omega$. 

Neither PINN does particularly well, as expected from the theory developed here for the Darcy case and the
analysis in \cite{wang2021eigenvector,wang2022_JCP} for the Poisson case. 
Although there are some similarities in the error profiles--for example, the variance over the independent trials
becomes larger for both PINN cases as $\epsilon$ decreases--there are some important differences to highlight 
as well. 

For the Poisson PINN, the average generalization error is approximately the same at both values of $\epsilon$, and 
in particular, the PINN satisfies the homogeneous Dirichlet BC to about three digits of accuracy for each $\epsilon$. To ensure
this, we had to set the boundary weight $\lambda_{\rm BC} = 10$ (this value was kept consistent for all the cases 
listed in Tables \ref{table:errors_poisson} and \ref{table:errors_darcy}). In contrast, the average 
error in the Darcy case increases for decreasing $\epsilon$, and in particular, the error at the boundaries
increases. These results are consistent with the discussion in Section \ref{subsec:discussion}, i.e.\ that the 
oscillatory coefficient within the differential operator for the Darcy PINN worsens the imbalance 
between the two loss terms $\mathcal{L}_{\rm PDE}$ and $\mathcal{L}_{\rm BC}$ already known to exist for Poisson PINNs.
\begin{table}
  \begin{center}
  \def~{\hphantom{0}}
    \begin{tabular}{c | c | c | c | c  }
\hline
     $\epsilon$ &  $\|e_{\rm P}\|_{L^{\infty}(\Omega)}$ & $ \| \textrm{Var}_{\rm P}\|_{L^{\infty}(\Omega)}$  &  $\|e_{\rm P}\|_{L^{\infty}(\partial \Omega)}$
                &  $\| \textrm{Var}_{\rm P}\|_{L^{\infty}(\partial \Omega)}$     \\
\hline
$1/64$    & $1.53\cdot 10^{-1}$ & $5.42\cdot 10^{-2}$ & $4.21\cdot 10^{-3}$ & $1.67\cdot 10^{-5}$  \\
$1/128$   & $1.60\cdot 10^{-1}$ & $9.83\cdot 10^{-2}$ & $4.00\cdot 10^{-3}$ & $5.26\cdot 10^{-5}$  \\
    \end{tabular}
    \caption{$\infty$-norm of the generalization error for the Poisson PINN throughout the domain $\Omega = [-\pi,\pi]$ and 
on the domain boundary $\partial \Omega$ for $\epsilon \in \{1/64,1/128\}$. Also shown are the maximum variances
 in the generalization errors, both in $\Omega$ and on the boundary $\partial \Omega$. 
}
  \label{table:errors_poisson}
  \end{center}
\end{table}

\begin{table}
  \begin{center}
  \def~{\hphantom{0}}
    \begin{tabular}{c | c | c | c | c  }
\hline
     $\epsilon$ &  $\|e_{\rm D}\|_{L^{\infty}(\Omega)}$ & $ \| \textrm{Var}_{\rm D}\|_{L^{\infty}(\Omega)}$  &  $\|e_{\rm D}\|_{L^{\infty}(\partial \Omega)}$
                &  $\| \textrm{Var}_{\rm D}\|_{L^{\infty}(\partial \Omega)}$     \\
\hline
$1/64$    & $1.01\cdot 10^{-1}$ & $8.43\cdot 10^{-2}$ & $7.84\cdot 10^{-2}$ & $8.34\cdot 10^{-2}$  \\
$1/128$   & $3.80\cdot 10^{-1}$ & $3.06\cdot 10^{-1}$ & $3.11\cdot 10^{-1}$ & $2.99\cdot 10^{-1}$  \\
    \end{tabular}
    \caption{
$\infty$-norm of the generalization error for the Darcy PINN throughout the domain $\Omega = [-\pi,\pi]$ and 
on the domain boundary $\partial \Omega$ for $\epsilon \in \{1/64,1/128\}$. Also shown are the maximum variances
 in the generalization errors, both in $\Omega$ and on the boundary $\partial \Omega$.
}
  \label{table:errors_darcy}
  \end{center}
\end{table}

%
%
%
%
%
%
%

\subsection{Rescaling the loss function}\label{subsec:rescale_test}
As mentioned in Section \ref{subsec:discussion}, it is reasonable to try mollify the imbalance between the two parts of the Darcy PINN loss function 
by rescaling the PDE residual portion $\mathcal{L}_{\rm PDE}$ by $\epsilon^2$, as this cancels the $1/\epsilon^2$ 
factor in the NTK matrix $\Keps_{uu}$ subblock. 
Here we test this idea at a sequence of $\epsilon$ values.  

We consider neural networks with three hidden layers each of width $d=70$. As before, we use hyperbolic 
tangent activation functions with $N_c = 1024$ collocation points. The training schedule consists of
ten thousands iterations each of the learning rates $\eta = 10^{-3}$, $10^{-4}$, $10^{-5}$ and $10^{-6}$, 
and then, on average a few thousand iterations of L-BFGS. As before, we conduct ten independent trials 
and then average the results. 

In particular we consider learning the solution function
$$
\ueps(x) = e^{-(x-1)^2} + \epsilon \sin(x/\epsilon) + c_1 x + c_2
$$
where $c_1$ and $c_2$ are chosen such that $\ueps$ satisfies homogeneous Dirichlet boundary conditions 
on the domain $[-\pi,\pi]$; with
$$
\aeps(x) = 1.1 + \cos(x/\epsilon)\cos(2x/\epsilon),
$$
we set the forcing function $f^{\epsilon}$ in the Darcy problem such that $\ueps$ indeed solves
\begin{equation}\label{eq:darcy_problem_case2}
-\frac{d}{dx} \Big( \aeps(x) \frac{d}{dx} \ueps(x) \Big) = f^{\epsilon}(x) .
\end{equation}

The pointwise generalization errors for three values of $\epsilon$, as well as the magnitude of their Fourier coefficents,
are shown in Figure \ref{fig:rescaled_darcy_cases} in Appendix \ref{sec:appendix5}. 
Compared to the Darcy PINNs without rescaling, the homogeneous
Dirichlet conditions are much better satisfied. For the $\epsilon = 1/36$, the average error on the boundary is on the order of $10^{-3}$, 
consistent with the Poisson cases in the previous section. For $\epsilon=1/72$ and $\epsilon = 1/144$, the accuracy 
is lower (on the order of $10^{-1}$), but the variance is quite low, in contrast to the huge variance observed in the 
unscaled Darcy PINNs. 
In each case, the error in Fourier space has both a low frequency peak concentrated in the first few wavenumbers and 
a peak at the $1/\epsilon$ frequency; on average the errors in these peaks increase for decreasing $\epsilon$. 
In particular, at the smallest value $\epsilon=1/144$, an additional low frequency peak emerges around $k=16$. 

As discussed extensively in \cite{wang2022_JCP} and mentioned in Section \ref{subsec:discussion}, both the magnitude 
and the distribution of the eigenvalues of the 
NTK matrix subblocks are closely connected with PINN performance. As shown in Figure \ref{fig:ntk_spectra}, there is a 
large discrepancy between the principal eigenvalue of the $\Keps_{uu}$ subblock and the rest of the spectrum, which
is a characteristic sign that the gradient flow dynamics \eqref{eq:grad_flow} are stiff. Although the rescaling introduced
by setting $\lambda_{\rm PDE} = \epsilon^2$ changes the magnitude of the eigenvalues of the $\Keps_{uu}$ subblock, it of course
does not change their distribution, and we attribute the increasing errors at decreasing $\epsilon$ values to the discrepancy 
in the spectrum.

\section{Conclusions}
\label{sec:conclusions}
Physics informed neural networks have demonstrated success
in a wide variety of problems in scientific computing 
\citep{karniadakis2021physics}, however, they can sometimes struggle to 
approximate solutions to differential equations with multiscale 
features. 
Athough PINN convergence theory guarantees that minimizers of (regularized) loss
functions can in principle well approximate solutions to elliptic problems,
in practice, achieving those minimizers is difficult
for boundary value problems with highly oscillatory 
coefficients present in the differential operator. 

We show that for a class of linear, multiscale elliptic equations in divergence form, 
the Frobenius norm of the neural tangent kernel matrix associated to the PINN 
becomes unbounded as the characteristic wavelength of the oscillations vanishes. 
Numerical examples illustrate that during training, the ordinary 
differential equation that governs the evolution of the PDE residuals
becomes increasingly stiff as $\epsilon \downarrow 0$, translating to 
poor PINN training behavior.  

%

The present work considers standard, fully connected 
neural network architectures; 
it may be possible in future research to  
develop alternative PINN architectures that are specifically 
adapted to the problem. 
For example, different activation functions with problem-tuned inductive biases 
can be used, as suggested in \cite{ziyin2020neural} for periodic functions. 
\cite{li2020multiscale} also proposed smooth and localized activation functions
for multiscale elliptic problems in the context of the Deep Ritz method \cite{yu2018deep}. 
In this context, it may be beneficial to develop a theory for the Neural Tangent Kernel 
matrix for energy-minimization based neural network methods for solving multiscale PDEs, as
the derivative of the oscillatory coefficient $\aeps$ does not appear, in contrast to PINNs.

Future work may also benefit from 
more sophisticated learning techniques
beyond (stochastic) gradient descent, 
for example genetic algorithms \citep{mirjalili2019genetic}.
Another possibility is to employ continuation methods \citep{allgower2003introduction},
i.e.\ ``curriculum regularization'' \citep{krishnapriyan2021characterizing}.

Recent results establishing a ``frequency principle'' 
in machine learning motivated this study;  
during training, neural networks tend to learn their target 
functions from low to high frequency \citep{luo2021}. See also the work by \citet{wang2021eigenvector} 
for spectral analysis of physics informed neural networks applied to Poisson problems 
 with multiscale forcing.
An interesting avenue for future study is to characterize the spectral behavior during training
 of deep learning methods for learning operators; see for example
\citep{li2020fourier,khoo2021solving,bhattacharya2021,lu2021learning,zhang:2022}.
In the context of the present work, this would be the nonlinear map from the 
problem data $\aeps$ and $f$ in \eqref{eq:darcys_law} to the solution $\ueps$. 
Finally, we note many other novel neural network approaches to multiscale elliptic problems
not discussed here have recently been proposed, for example 
\citep{han2022neural,leung2021nh,fabian2022operator}.
We hope some of the results presented here might 
inspire insight into these exciting approaches.

\subsection*{Acknowledgments}
The authors thank Andrea Bertozzi for organizing the UCLA Computational and Applied
Mathematics REU site where much of this work took 
place. The authors also thank Chris Anderson and Michael Murray for 
helpful discussions. 

\bibliographystyle{tmlr}
\bibliography{references}


\appendix
\section{Description of PINN approximation for multiscale Poisson problem}
\label{sec:appendix0}
Here we give a more complete description of the example from Section \ref{sec:motivation} 
of a PINN approximation to the Poisson boundary value problem \eqref{eq:1d_poisson} with 
homogeneous Dirichlet boundary conditions. 

The PINN solution is computed with a fully connected neural network with four hidden 
layers of sixty nodes each and hyperbolic tangent activation functions. The training consisted
of $100$, $1,000$, $15,000$, $30,000$ and $10,000$ iterations with the Adam optimizer
with learning rates of $\eta=10^{-2}$, $\eta=10^{-3}$, $\eta=10^{-4}$, $\eta=10^{-5}$, and 
$\eta=10^{-6}$, respectively. We note, however, using a constant value of $\eta=10^{-3}$ throughout
training produced results qualitatively similar to those in Figure \ref{fig:freq_principle_1d_poisson}; 
i.e. the observed frequency principle was fairly robust to changes in the learning rate. 
As mentioned in the description in Section \ref{sec:motivation}, for legibility, the results shown in 
 Figure \ref{fig:freq_principle_1d_poisson} are that of the rolling average (of 50 values) to the trajectories
of the error in the Fourier coefficients. 

Finally, we mention that at the end of the training process, the $\infty$-norm error between 
the true solution and the neural network approximation is about $1.26\cdot 10^{-2}$.

\section{Proof of Lemma \ref{lem:ntk}}
\label{sec:appendix1}

\begin{proof}                    
Computing first the entries of the gradient of the loss function \eqref{eq:loss_eps} 
gives 
\begin{equation}\label{eq:lem1}
\pderiv{\losseps}{\theta_l} = \frac{1}{N_c} \sum_{j=1}^{N_c} r_{\rm pde}(x_j; \theta) \pderiv{}{\theta_l} \Leps u(x_j;\theta)  
+ \frac{\lambda}{N_b} \sum_{j=1}^{N_b} r_{\rm b}(s_j; \theta) \pderiv{}{\theta_l} u(s_j;\theta).
\end{equation}
Next, compute
\begin{align}
\frac{d}{dt} r_{\rm pde}(x_i;\theta(t)) &=\frac{d}{dt}\left(\Leps u(x_i;\theta(t)) - f(x_i) \right) 
= \sum_{l=1}^{N_p} \pderiv{}{\theta_l} \Leps u(x_i;\theta) \frac{d \theta_l}{dt} \nonumber \\
&= -\sum_{l=1}^{N_p} \pderiv{}{\theta_l}\Leps u(x_i;\theta) \pderiv{\losseps}{\theta_l} , \label{eq:lem2}
\end{align}
where the final equality comes from the assumption of gradient flow dynamics \eqref{eq:grad_flow}.
A similar computation gives
\begin{align}
\frac{d}{dt} r_{\rm b}(s_i;\theta(t)) &=\frac{d}{dt} \left( u(s_i;\theta(t))-g(s_i)\right) 
=\sum_{l=1}^{N_p} \pderiv{}{\theta_l} u(s_i;\theta) \frac{d \theta_l}{dt}  \nonumber \\
&= -\sum_{l=1}^{N_p}\pderiv{}{\theta_l} u(s_i;\theta) \pderiv{\losseps}{\theta_l} , \label{eq:lem3}
\end{align}
Inserting \eqref{eq:lem1} into \eqref{eq:lem2} and rearranging the sums gives
\begin{align*}
\frac{d}{dt} r_{\rm pde}(x_i;\theta(t))   
= -\Big(&\sum_{j=1}^{N_c} \Big[ \frac{1}{N_c} \sum_{l=1}^{N_p} 
\pderiv{}{\theta_l}\Leps u(x_i;\theta)\pderiv{}{\theta_l}\Leps u(x_j;\theta)\Big] r_{\rm pde}(x_j;\theta) \nonumber \\
&+\sum_{j=1}^{N_b} \Big[ \frac{\lambda}{N_b}\sum_{l=1}^{N_p} 
\pderiv{}{\theta_l}\Leps u(x_i;\theta)\pderiv{}{\theta_l}u(s_j;\theta)\Big] r_{\rm b}(s_j;\theta)\Big),
\end{align*}
while inserting \eqref{eq:lem1} into \eqref{eq:lem3} similarly results in 
\begin{align*}
\frac{d}{dt} r_{\rm b}(s_i;\theta(t))   
= -\Big(&\sum_{j=1}^{N_c} \Big[ \frac{1}{N_c} \sum_{l=1}^{N_p} 
\pderiv{}{\theta_l}u(s_i;\theta)\pderiv{}{\theta_l}\Leps u(x_j;\theta)\Big] r_{\rm pde}(x_j;\theta) \nonumber \\
&+\sum_{j=1}^{N_b}  \Big[\frac{\lambda}{N_b} \sum_{l=1}^{N_p} 
\pderiv{}{\theta_l} u(s_i;\theta)\pderiv{}{\theta_l}u(s_j;\theta)\Big] r_{\rm b}(s_j;\theta)\Big)
\end{align*}
as desired.

\end{proof}                              

\section{Proof of Theorem \ref{thm:second_thm}}
\label{sec:appendix2}
\begin{proof}                                              
From \eqref{eq:full_nn} we note
that $W^{(1)} \in \mathbb{R}^{d_1\times 1}$, $W^{(l)} \in \mathbb{R}^{d_l \times d_{l-1}}$
for $l = 2,\ldots,\Lambda$, and $W^{(\Lambda+1)} \in \mathbb{R}^{1\times d_{\Lambda}}$, 
while $b^{(l)} \in \mathbb{R}^{d_l}$ for $l=1,\ldots,\Lambda$ and $b^{(\Lambda+1)} \in \mathbb{R}$. 
The fully connected neural network \eqref{eq:full_nn} can then be written as 
\begin{align*}
u(x;\theta) &= \sum_{k_{\Lambda}=1}^{d_{\Lambda}} W^{(\Lambda+1)}_{k_{\Lambda}}
u^{(\Lambda)}_{k_{\Lambda}}(x) + b^{(\Lambda+1)},    \nonumber \\
u^{(\Lambda)}_{k_{\Lambda}}(x) &= 
\sigma\Big(\sum_{k_{\Lambda-1}=1}^{d_{\Lambda-1}} W_{k_{\Lambda}k_{\Lambda-1}}^{(\Lambda)} 
u^{(\Lambda-1)}_{k_{\Lambda-1}}(x) + b_{k_{\Lambda}}^{(\Lambda)} \Big), \qquad 1 \le k_{\Lambda} \le d_{\Lambda}, \nonumber \\
u^{(\Lambda-1)}_{k_{\Lambda-1}}(x) &= 
\sigma\Big(\sum_{k_{\Lambda-2}=1}^{d_{\Lambda-2}} W^{(\Lambda-1)}_{k_{\Lambda-1}k_{\Lambda-2}}
u^{(\Lambda-2)}_{k_{\Lambda-2}}(x) + b^{(\Lambda-1)}_{k_{\Lambda-1}}\Big), \qquad 1 \le k_{\Lambda-1} \le d_{\Lambda-1},   \nonumber \\
&\vdots  \nonumber \\
u^{(2)}_{k_{2}}(x) &= 
\sigma\Big(\sum_{k_1=1}^{d_1} W^{(2)}_{k_{2}k_{1}}
u^{(1)}_{k_1}(x) + b^{(2)}_{k_{2}}\Big), \qquad 1 \le k_2 \le d_2  \nonumber \\
u^{(1)}_{k_{1}}(x) &= 
\sigma\Big(W^{(1)}_{k_{1}}
x + b^{(1)}_{k_{1}}\Big), \qquad 1 \le k_1 \le d_1.     \label{eq:full_nn_index} 
\end{align*}
By linearity, note that
\begin{equation}\label{eq:linearity_again}
\pderiv{}{W_{\gamma}^{(\Lambda+1)}} \Leps u(x;\theta) = \Leps u^{(\Lambda)}_{\gamma}(x)
\end{equation}
for any $1 \le \gamma \le d_{\Lambda}$.

As before, consider a diagonal entry in the $\Keps_{uu}$ subblock 
of the NTK matrix \eqref{eq:Keps}
\begin{equation*} \label{eq:Kuu_entry_again}
(\Keps_{uu})_{\alpha\alpha}(t) = \frac{1}{N_c} \sum_{\gamma=1}^{N_p} 
\pderiv{}{\theta_{\gamma}} \Leps u(x_{\alpha}; \theta(t))   \pderiv{}{\theta_{\gamma}}\Leps u(x_{\alpha}; \theta(t)).
\end{equation*}
By \eqref{eq:linearity_again}, a sufficient condition for the Frobenius norm of $\Keps$ to blow-up, then, is for
\begin{equation*}\label{eq:easter_egg}
\lim_{\epsilon \downarrow 0} \big(\Leps u^{(\Lambda)}_{\gamma}(x_{\alpha}) \big)^2 = \infty
\end{equation*}
for some collocation point $x_{\alpha}$, some $1 \le \gamma \le d_{\Lambda}$, and $t \in \Upsilon$ 
(here the dependence of $u^{(\Lambda)}_{\gamma}$ on $t$ and $\theta$ is implied). 
Since 
\begin{align}
\big(\Leps u^{(\Lambda)}_{\gamma}(x_{\alpha})\big)^2  = \Big[ 
\Big( a(x_{\alpha}/\epsilon) \frac{d^2}{dx^2}\, u^{(\Lambda)}_{\gamma}(x_{\alpha})\Big)^2 &+ 
\frac{2}{\epsilon} \Big(a(x_{\alpha}/\epsilon) \frac{d^2}{dx^2}\, u^{(\Lambda)}_{\gamma}(x_{\alpha})
\, a'(x_{\alpha}/\epsilon)  \frac{d}{dx}\, u^{(\Lambda)}_{\gamma}(x_{\alpha})\Big)\nonumber  \\
& +  \frac{1}{\epsilon^2} \Big(a'(x_{\alpha}/\epsilon) \frac{d}{dx}\, u^{(\Lambda)}_{\gamma}(x_{\alpha})\Big)^2\Big],
\label{eq:want_to_blowup}
\end{align}
and since $a$ is a periodic $C^1$ function, the desired result will follow if (i) the second derivative of $u_{\gamma}^{(\Lambda)}$ 
is uniformly bounded in $\epsilon$ and (ii) the first derivative is asymptotically larger than $\epsilon$, so that 
\begin{equation}\label{eq:key_play}
\lim_{\epsilon \downarrow 0} \frac{1}{\epsilon} \left| \frac{d}{dx} \, u^{(\Lambda)}_{\gamma}(x) \right| = \infty
\end{equation}
for any $t \in \Upsilon$. 

The first derivative is 
\begin{equation}\label{eq:1st_deriv_u_full}
\frac{d}{dx} u^{(\Lambda)}_{\gamma}(x) = 
\sigma'\Big(\sum_{k_{\Lambda-1}=1}^{d_{\Lambda-1}} W_{\gamma k_{\Lambda-1}}^{(\Lambda)} 
u^{(\Lambda-1)}_{k_{\Lambda-1}}(x) + b_{k_{\Lambda}}^{(\Lambda)} \Big) 
\sum_{k_{\Lambda-1}=1}^{d_{\Lambda-1}}\Big[ W_{\gamma k_{\Lambda-1}}^{(\Lambda)} 
\frac{d}{dx} u^{(\Lambda-1)}_{k_{\Lambda-1}}(x) \Big]
\end{equation}
after applying the chain rule once. 
Using Einstein summation notation for brevity, \eqref{eq:1st_deriv_u_full} simplifies to 
\begin{equation*}\label{eq:1st_deriv_u_full_einstein}
\frac{d}{dx} u^{(\Lambda)}_{\gamma}(x) = 
\sigma'\Big(W_{\gamma k_{\Lambda-1}}^{(\Lambda)} 
u^{(\Lambda-1)}_{k_{\Lambda-1}}(x) + b_{k_{\Lambda}}^{(\Lambda)} \Big) 
\Big[ W_{\gamma k_{\Lambda-1}}^{(\Lambda)} 
\frac{d}{dx} u^{(\Lambda-1)}_{k_{\Lambda-1}}(x) \Big]. 
\end{equation*}
Continuing with the chain rule (and the Einstein notation), we get 
\begin{align}
\frac{d}{dx} u^{(\Lambda)}_{\gamma}(x) 
&= 
\sigma'\Big( W_{\gamma k_{\Lambda-1}}^{(\Lambda)} 
u^{(\Lambda-1)}_{k_{\Lambda-1}}(x) + b_{k_{\Lambda}}^{(\Lambda)} \Big)  \, \times \nonumber \\
&\Big[ W_{\gamma k_{\Lambda-1}}^{(\Lambda)} 
\sigma'\Big(W^{(\Lambda-1)}_{k_{\Lambda-1} k_{\Lambda-2}} 
u^{(\Lambda-2)}_{k_{\Lambda-2}}(x) + b^{(\Lambda-1)}_{k_{\Lambda-1}} \Big) 
\Big[W^{(\Lambda-1)}_{k_{\Lambda-1} k_{\Lambda-2}} 
\frac{d}{dx} u^{(\Lambda-2)}_{k_{\Lambda-2}}(x) \Big]\Big]. \nonumber
\end{align}
From here we can recurse downwards to compute the full derivative as 
\begin{align}
\frac{d}{dx} u^{(\Lambda)}_{\gamma}(x) = 
\sigma'\Big( W_{\gamma k_{\Lambda-1}}^{(\Lambda)} 
u^{(\Lambda-1)}_{k_{\Lambda-1}}(x) + b_{k_{\Lambda}}^{(\Lambda)} \Big)  \, \times& \nonumber \\
\Big[ W_{\gamma k_{\Lambda-1}}^{(\Lambda)} 
\sigma'\Big(W^{(\Lambda-1)}_{k_{\Lambda-1} k_{\Lambda-2}} 
u^{(\Lambda-2)}_{k_{\Lambda-2}}(x) + b^{(\Lambda-1)}_{k_{\Lambda-1}} \Big) 
\Big[W^{(\Lambda-1)}_{k_{\Lambda-1} k_{\Lambda-2}} 
\sigma'\Big(W^{(\Lambda-2)}_{k_{\Lambda-2} k_{\Lambda-3}} 
u^{(\Lambda-3)}_{k_{\Lambda-3}}(x) + b^{(\Lambda-2)}_{k_{\Lambda-2}} \Big) \times& \nonumber \\
\Big[\ldots \Big[W^{(3)}_{k_3 k_2} \sigma'\Big(W^{(2)}_{k_2 k_1}u^{(1)}_{k_1}(x)+b^{(2)}_{k_2} \Big) 
 \Big[W^{(2)}_{k_2 k_1}\sigma'\Big(W^{(1)}_{k_1}x + b^{(1)}_{k_1}\Big)W^{(1)}_{k_1} \Big]\Big]\ldots \Big]\Big]\Big]&.
\label{eq:dudx}
\end{align}
By the uniform boundedness assumption \eqref{eq:assumption2} on the network parameters $\theta$, 
\eqref{eq:dudx} is uniformly bounded in $\epsilon$ from above for any $x \in [a,b]$. As a consequence of the positivity assumption 
\eqref{eq:sigma_assumption}, \eqref{eq:assumption2} further 
implies that each instance of $d\sigma/dx$ in \eqref{eq:dudx} is bounded below by a positive number 
that is independent of $\epsilon$. Note that \eqref{eq:dudx} generally contains sums and products of 
$d\sigma/dx$ and network weights (entries in the $W^{(l)}$ matrices). 
For any $t \in \Upsilon$, the weights are asymptotically larger than $\epsilon^{1/\Lambda}$, so
that 
$$
\lim_{\epsilon \downarrow 0} \Big( \frac{\epsilon}
{
W^{(1)}_{k_1} \, W^{(2)}_{k_2 k_1} \ldots W^{(\Lambda-1)}_{k_{\Lambda-1} k_{\Lambda-2}} W^{(\Lambda)}_{k_{\Lambda} k_{\Lambda-1}} 
} \Big) = 0 
$$ 
for any combination of indices $1\le k_1 \le d_1$, $1 \le k_2 \le d_2$, and so on, meaning 
that \eqref{eq:key_play} indeed holds. 


Ergo, as in the proof of Theorem \ref{thm:main_thm}, as long as $\epsilon$ vanishes monotonically 
to zero in such a way that $a'(x_{\alpha}/\epsilon) \ne 0$ any $\epsilon$, 
\eqref{eq:want_to_blowup} limits to positive infinity as desired.

\end{proof}

\section{Proof of Corollary \ref{cor:spectral_radius}}
\label{sec:appendix3}
\begin{proof}
From elementary linear algebra, we know that the Frobenius norm of the square matrix $\Keps_{uu}(t)$
equals the Euclidean norm of its singular values, and because $\Keps_{uu}(t)$ is a symmetric matrix, 
its singular values equal the absolute value of its (real valued) eigenvalues. If 
 $\{\lambda^{\epsilon}_i(t) \}_{i=1}^{N_c}$ denotes the eigenvalues, then 
$$
\norm{\Keps_{uu}(t)}_{F} =  
\Big(\sum_{i=1}^{N_c} (\lambda^{\epsilon}_i(t))^2 \Big)^{1/2} \le 
\sqrt{N_c} \, \rho(\Keps_{uu}(t))  
$$
giving the result. 
\end{proof}

\section{Boundary value problem forcing functions} 
\label{sec:appendix4}
The right-hand side forcing function $f^{\epsilon}(x)$ for the Darcy BVP 
\eqref{eq:darcy} with homogenenous Dirichlet boundary conditions 
is given by 
$g^{\epsilon}(x)/h^{\epsilon}(x)$, where
\begin{align}
g^{\epsilon}(x) = 10 \, \Big[20 \, \pi + & 168 \, \pi \epsilon \cos\left(x\right) \sin\left(x\right) - 
20  \Big(2 \pi - 4  \pi \cos^2(x) + \epsilon \sin(\pi/\epsilon)\Big) \cos(x/\epsilon)  \nonumber \\
&+ \Big(21  \pi + 160 \, \pi \epsilon \cos\left(x\right) \sin(x)\Big) \sin(x/\epsilon)\Big], \nonumber
\end{align}
and
\begin{equation*}
h^{\epsilon}(x) = 400\,  \pi \epsilon   \sin^2(x/\epsilon) + 840 \, \pi \epsilon   \sin(x/\epsilon) + 441 \, \pi \epsilon .
\end{equation*}

\section{Error plots for decreasing $\epsilon$} 
\label{sec:appendix5}

\begin{figure}[h]       
    \centering
(a)
 \includegraphics[width=0.45\textwidth]{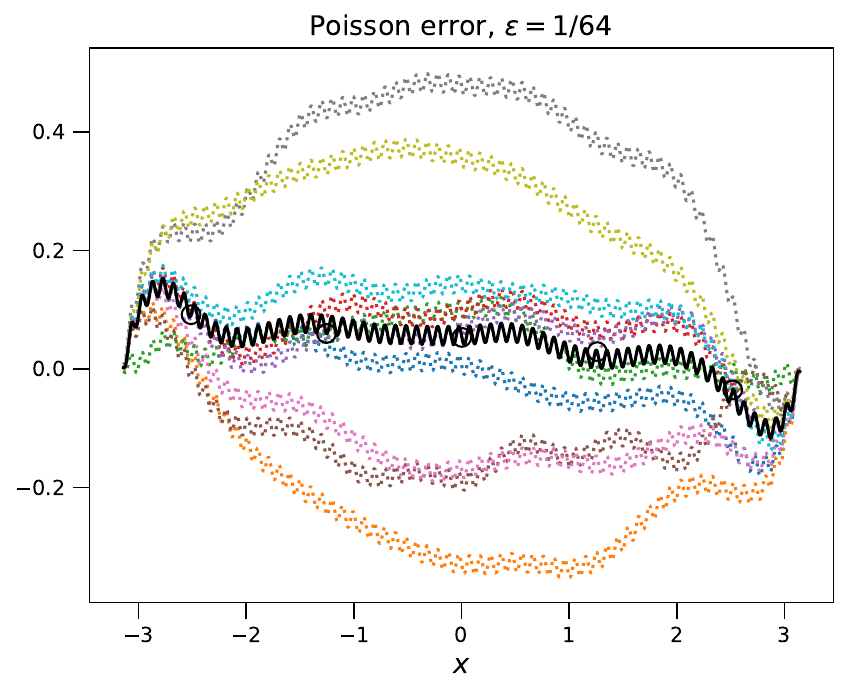}
(b)
 \includegraphics[width=0.45\textwidth]{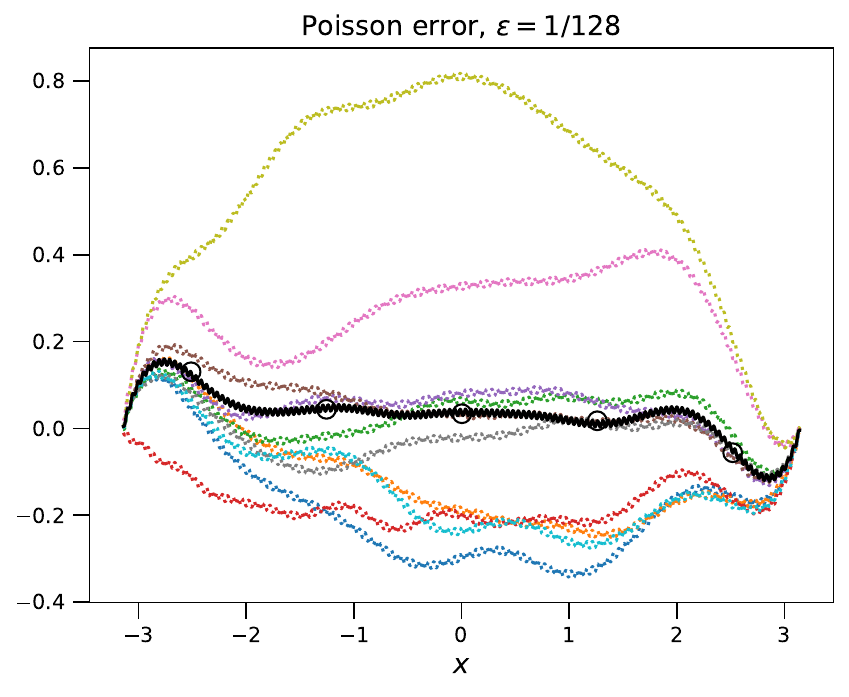}
\caption{
Pointwise generalization errors for the Poisson PINN \eqref{eq:loss_poissonn}. The dotted lines indicate 
independent trial runs, while the solid line with circle markers indicates the 
mean. 
} 
\label{fig:poisson_smaller_eps}
\end{figure}

\begin{figure}[h]       
    \centering
(a)
 \includegraphics[width=0.45\textwidth]{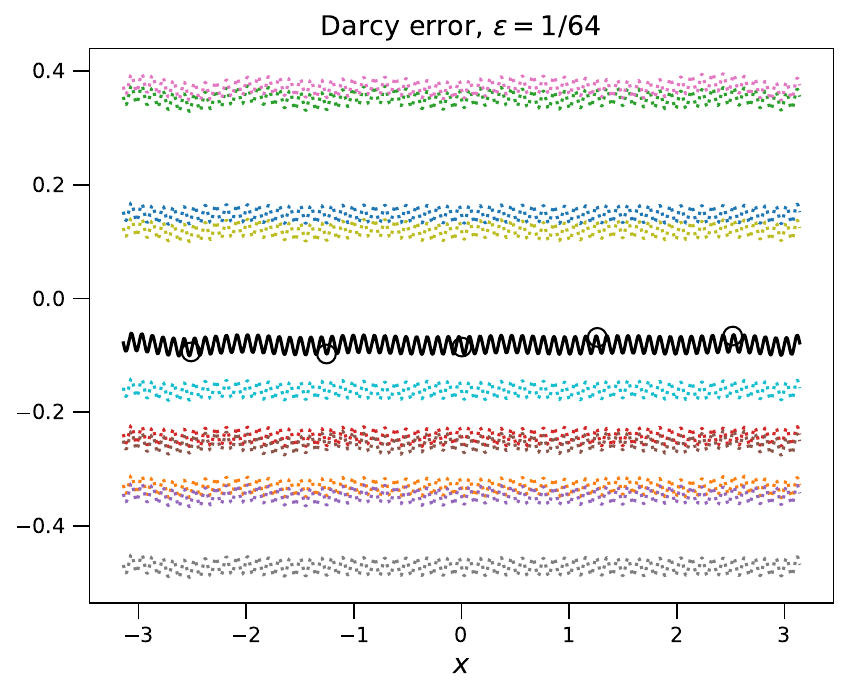}
(b)
 \includegraphics[width=0.45\textwidth]{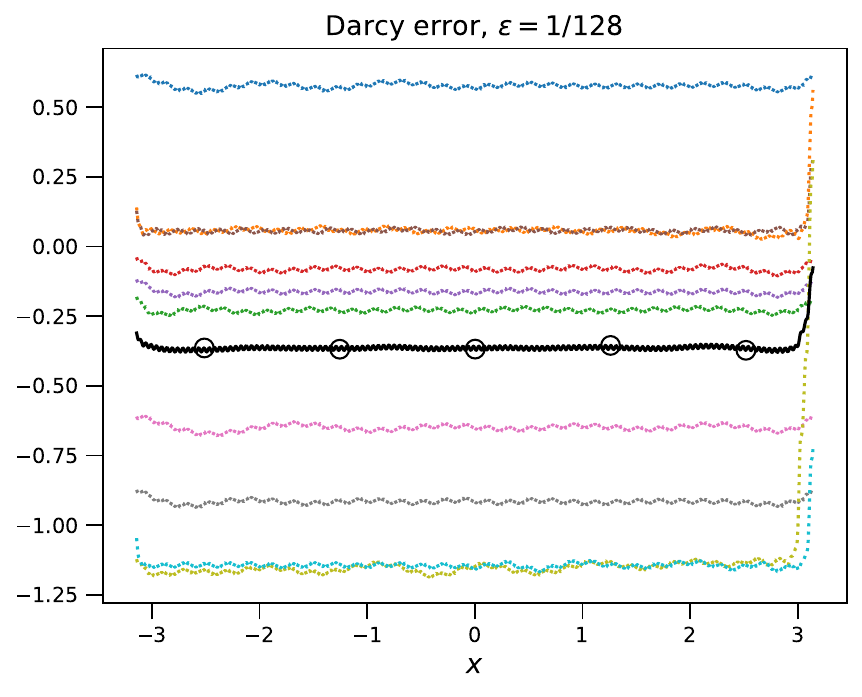}
\caption{
Pointwise generalization errors for the Darcy PINN \eqref{eq:loss_darcy}. The dotted lines indicate 
independent trial runs, while the solid line with circle markers indicates the 
mean.
} 
\label{fig:darcy_smaller_eps}
\end{figure}

\begin{figure}[h]       
    \centering
 \includegraphics[width=0.45\textwidth]{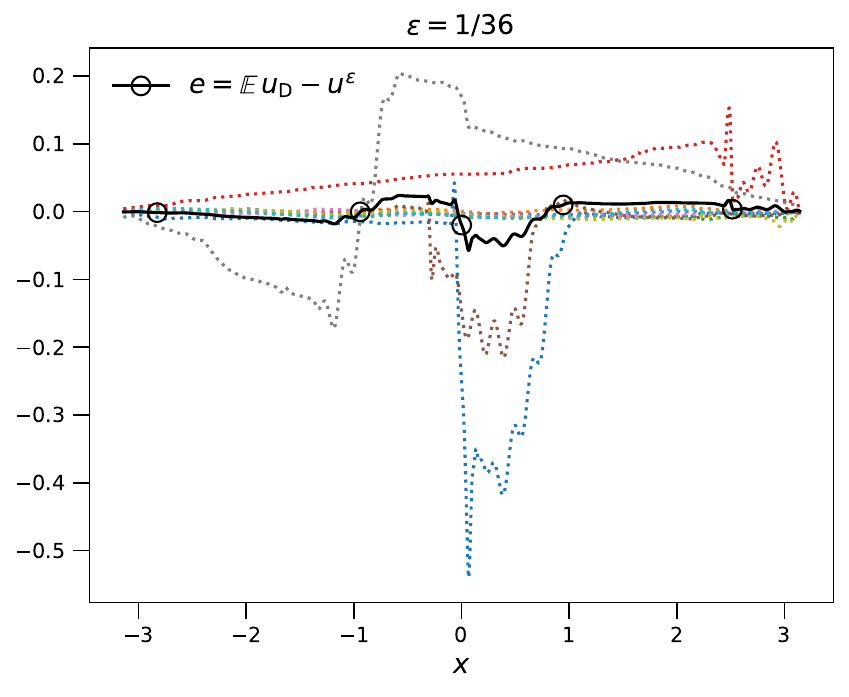}
 \includegraphics[width=0.45\textwidth]{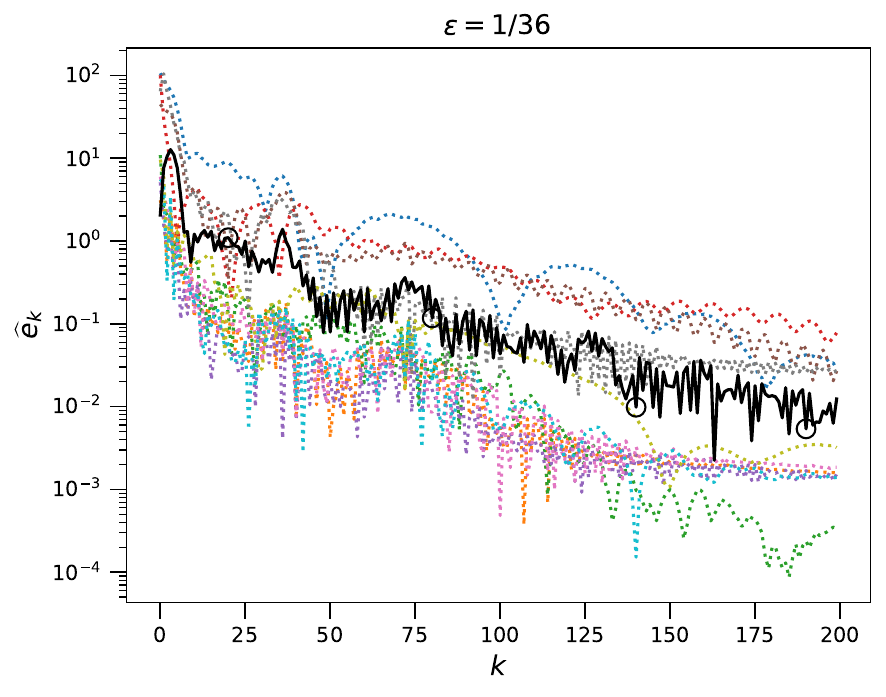}
 \includegraphics[width=0.45\textwidth]{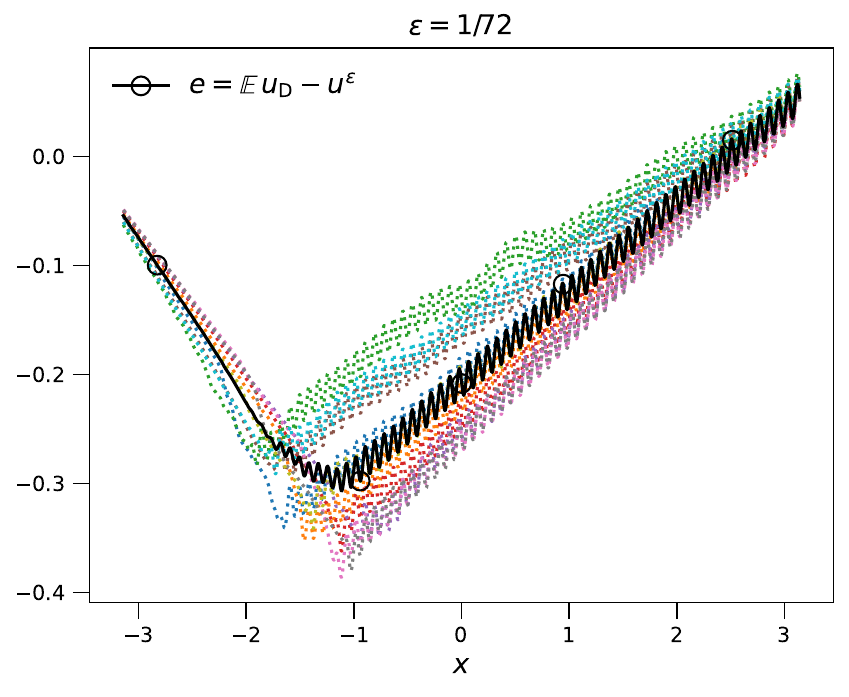}
 \includegraphics[width=0.45\textwidth]{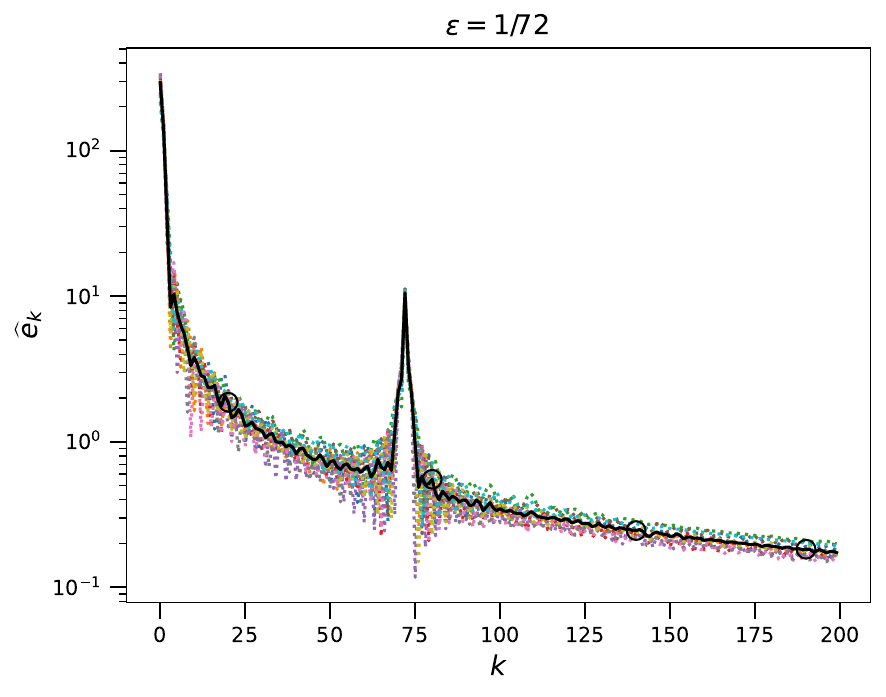}
 \includegraphics[width=0.45\textwidth]{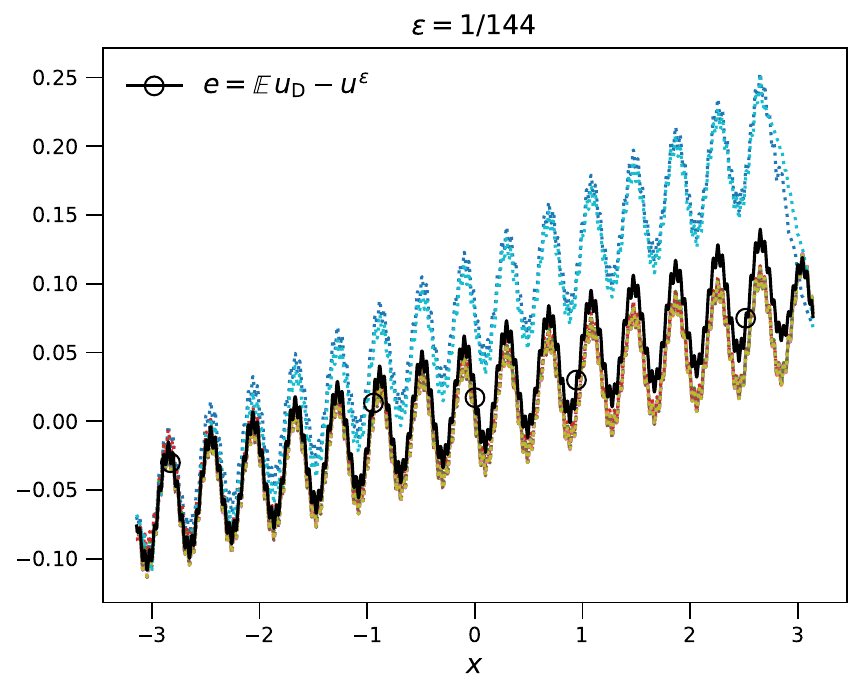}
 \includegraphics[width=0.45\textwidth]{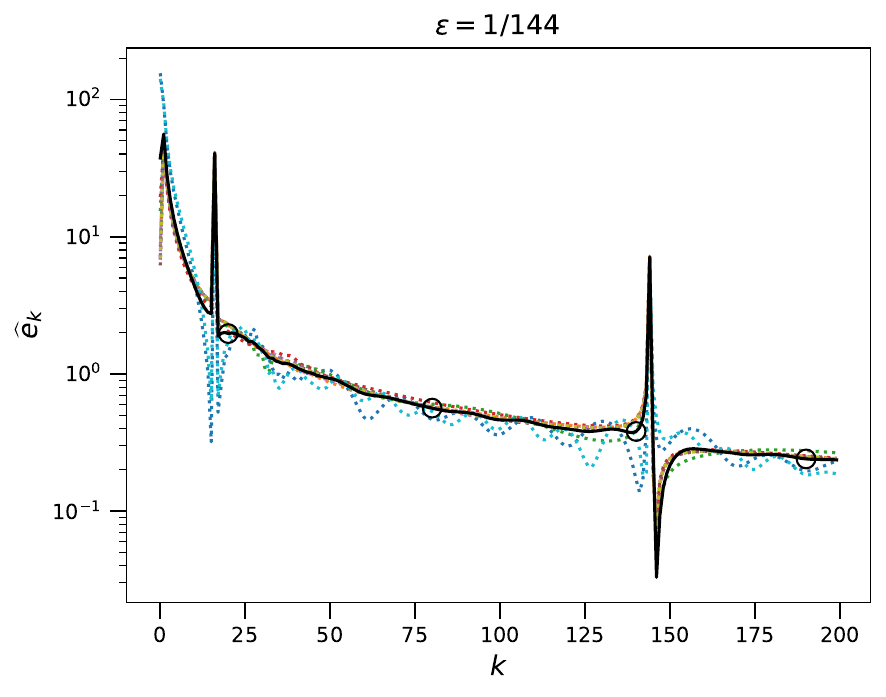}
\caption{
Results at three different $\epsilon$ values for the Darcy PINN associated to \eqref{eq:darcy_problem_case2} 
with the PDE 
residual part of the loss function weighted by $\lambda_{\rm PDE} = \epsilon^2$.
The left columns display the pointwise generalization error on $[-\pi,\pi]$, and the right columns display
the magnitude of the first two hundred Fourier coefficients of the error.
The dotted lines indicate 
independent trial runs, while the solid line with circle markers indicates the 
mean.
} 
\label{fig:rescaled_darcy_cases}
\end{figure}

\end{document}